\newtheorem{Theorem}[equation]{Theorem}
\newtheorem{Corollary}[equation]{Corollary}
\newtheorem{Proposition}[equation]{Proposition}
\newtheorem{Remark}[equation]{Remark}
\theoremstyle{definition}
\newtheorem{Definition}[equation]{Definition}
\newtheorem{Example}[equation]{Example}
\numberwithin{equation}{section}
\numberwithin{figure}{section}
\newcommand{\C}{{\mathbb C}}
\newcommand{\Z}{{\mathbb Z}}
\newcommand{\N}{{\mathbb N}}
\newcommand{\si}{I_{p,q}^\pm}
\newcommand{\mc}[1]{\mathcal{#1}}
\newcommand{\ms}[1]{\mathscr{#1}}
\newcommand{\mt}[1]{\text{#1}}
\begin{document}

\title{The Genesis of Involutions\\
(Polarizations and Lattice Paths)}

\author[1]{Mahir Bilen Can}
\author[2]{\"Ozlem U\u{g}urlu}

\affil[1]{{\small Tulane University, New Orleans; mahirbilencan@gmail.com}}    
\affil[2]{{\small Tulane University, New Orleans; ougurlu@tulane.edu}}

\normalsize

\date{\today}
\maketitle

\begin{abstract}
The number of Borel orbits in polarizations 
(the symmetric variety $SL_{n}/S(GL_p\times GL_q)$) is analyzed, 
various (bivariate) generating functions are found. 
Relations to lattice path combinatorics are explored. 
\vspace{.2cm}

\noindent 
\textbf{Keywords:} Signed involutions, Borel group orbits, polarizations, lattice paths,\linebreak $t$-analogs.\\ 
\noindent 
\textbf{MSC:} 05A15, 14M15.
\end{abstract}

\section{Introduction}\label{S:Introduction}

The classification of automorphisms of order two on a simple 
algebraic group $G$ has a long history. Over the field of 
complex numbers $k=\C$, this problem is equivalent to 
classifying the real forms of simple complex Lie algebras 
and it is solved by E. Cartan. Subsequently, 
by the works of Araki~\cite{Araki} and Helminck~\cite{Helminck}, 
Cartan's ``theory of involutions'' is extended over arbitrary fields 
(see~\cite{Springer}, also). According to Cartan's classification, roughly 
stated, in type $A$, there are three nontrivial automorphisms 
of order two up to inner automorphisms;
\begin{enumerate}
\item $G=SL_n$ and $\theta_1 : G\rightarrow G$ is defined by 
$$
\theta_1( g) = (g^{-1})^\top,
$$ 
where $\top$ stands for the matrix-transposition.

\item $G=SL_{2n}$ and $\theta_2 : G\rightarrow G$ is defined by 
$$
\theta_2( g) =J (g^{-1})^\top J,
$$ 
where $J$ denotes the skew form 
$J = \begin{bmatrix} 0 & id_n \\  -id_n & 0 \end{bmatrix}$, 
and $id_n$ is the identity matrix of order $n$. 

\item The third automorphism on $G=SL_n$ is actually a member of a 
family of automorphisms defined as follows: 
Let $p$ and $q$ be two positive integers such that $n=p+q$ and define 
$
J_{p,q} := 
\begin{bmatrix}
0 & 0 & s_q \\
0 & id_{p-q} & 0 \\
s_q & 0 & 0 
\end{bmatrix},
$
where $s_q$ is the $q\times q$ block-matrix with 1's along its anti-diagonal and 
0's elsewhere. The automorphism $\theta_3: SL_n \rightarrow SL_n$, 
which depends on $p$ and $q$, is defined by 
$$
\theta_3 (g ) = J_{p,q} (g^{-1})^\top J_{p,q}.
$$ 
\end{enumerate}

The fixed point subgroup of an automorphism of order two on $G$ is called 
a symmetric subgroup. In type $A$, accordingly, 
the symmetric subgroups are given by 
\begin{enumerate}
\item the special orthogonal group, denoted by $SO_n$, 
\item the symplectic group, denoted by $Sp_n$,
\item the maximal Levi subgroup $S(GL_p\times GL_q)$, which consists of 
block-diagonal matrices of the form 
$\begin{bmatrix}
A & 0 \\ 
0 & B
\end{bmatrix}$, where $A\in GL_p, B\in GL_q$ and $\det A\cdot  \det B = 1$. 
\end{enumerate}
Let $\ms{B}_n$ denote the flag variety of $SL_n$, namely, 
$\ms{B}_n:= SL_n/B_n$,  
where $B_n$ is the Borel subgroup consisting of upper triangular 
matrices in $SL_n$. 
Our goal in this article is to gain a combinatorial 
understanding of the orbits of the symmetric subgroup 
$S(GL_p\times GL_q)$ in $\ms{B}_n$ by 
studying various generating functions and lattice paths. 
To motivate our discussion and to set up some notation, 
let us give a brief account of 
what is known about the actual sets of combinatorial objects that 
parametrize the symmetric subgroup orbits (in type $A$).

\vspace{.5cm}

Let $T_n$ denote the maximal torus
consisting of diagonal matrices in $SL_n$. 
The Weyl group of the pair $(SL_{n},T_n)$, 
denoted by $S_n$, is the quotient 
group $N_{SL_{n}}(T_n)/T_n$, 
where $N_{SL_{n}}(T_n)$ is the 
normalizer subgroup of $T_n$ in $SL_{n}$. 
We will refer to $S_n$ as the {\em permutation group}
since it is naturally isomorphic to the group of 
permutations of the set $\{1,\dots, n\}$.
We will refer to the following triangular decomposition 
of matrices as the {\em Bruhat-Chevalley decomposition}: 
$$
SL_n = \cup_{\sigma \in S_n} B_n \dot{\sigma} B_n.
$$
The dot on $\sigma$ indicates that we are using a coset representative
of $T_n$ from $N_{T_n}(SL_n)$.

The special linear group $SL_n$, hence, any of its subgroups 
act on $\ms{B}_n$ by left multiplication,
$$
SL_n \times \ms{B}_n \rightarrow \ms{B}_n,\qquad (g,aB_n/B_n) \mapsto ga B_n/B_n.
$$
We call the Zariski closure of a $B_n$-orbit in $\ms{B}_n$ a Schubert variety.
On one hand, the Bruhat-Chevalley decomposition for $SL_n$ 
shows that the Schubert varieties are parametrized by the permutation  
group $S_n$. 
On the other hand, the automorphisms $\theta_i$ ($i=1,2,3$) of $SL_{n}$ 
give (set) automorphisms on $S_n$. 
In particular, the fixed point subsets in $S_n$ 
of these automorphisms are given by 
\begin{enumerate}
\item $I_n= \{\pi \in S_n:\ \pi^2 = id \}$, the set of all involutions in $S_n$;
\item $FI_n=\text{ the set of fixed point free involutions in $S_n$}$; 
\item $\si= \text{ the set of ``signed $(p,q)$-involutions''.}$
\end{enumerate}
\begin{Definition} 
A signed $(p,q)$-involution $\pi \in S_n$ is an involution with an 
assignment of $+$ and $-$ signs to the fixed points of $\pi$ such 
that there are $p - q $ more $+$'s than $-$'s if $q \leq p$. 
(If $p \leq q$, there are $q-p$ more $-$ signs than $+$ signs.) 
The set of signed $(p,q)$-involutions is denoted by $\si$
and its cardinality is denoted by $\alpha_{p,q}$. 
\end{Definition}
For example, $\pi = (1\; 6)(2\; 3)(4^+)(5^-)(7^+)(8^+)$ is a 
signed $(5,3)$-involution. In this example, $p=5$ is 
the number of fixed points with a $+$ sign plus the number of 
2-cycles in $\pi$, while $q=3$ is equal to the number of fixed 
points with a $-$ sign plus the number of 2-cycles in $\pi$. 
Indeed, there are $p-q=2$ more $+$'s than $-$'s. 
\vspace{.5cm}

The sets $I_n$ and $FI_n$, respectively, 
parametrize the orbits of $SO_n$ in 
$\ms{B}_n$ and $Sp_n$ in $\ms{B}_{2n}$. 
See~\cite{RS}. The set $\si$ parametrizes 
the $S(GL_p\times GL_q)$-orbits in $\ms{B}_n$. See~\cite{MO,Wyser}
and the preliminaries section of~\cite{CU}.
The numbers of elements of the sets $I_n$ and $FI_n$ are 
well-investigated and many combinatorial properties of them 
found their ways into textbooks. 
See, for example, Bona's~\cite{Bona}. 
We will review some of the pertaining results in the sequel.
The main goal of our article is to give a count of the signed 
$(p,q)$-involutions and to reinterpret them in the context of 
lattice paths. To this end, by partitioning the set $\si$ according to 
the number of 2-cycles that appear in its elements, we define 
the numbers $\gamma_{k,p,q}$ as follows:
$$
\gamma_{k,p,q}:= \text{ the number of signed 
$(p,q)$-involutions with exactly $k$ 2-cycles.}
$$ 
Our first main result is a recurrence relation for $\gamma_{k,p,q}$'s.
\begin{Theorem}\label{T:first main result:intro} 
Let $p$ and $q$ be two positive integers. If $k=0$, 
then we have $\gamma_{0,p,q}= {p+q \choose q}$.
If $1\leq k \leq q$, 
then the following recurrence relation holds true: 
\begin{equation}\label{recurrence}
\gamma_{k,p,q} =  \gamma_{k,p-1,q} +  \gamma_{k,p,q-1} + (q+p-1) \gamma_{k-1,p-1,q-1}.
\end{equation}
Furthermore, the formula for $\gamma_{k,p,q}$ is given by 
\begin{align}\label{A:first formula:intro}
\gamma_{k,p,q} =  \frac{ (p+q)! }{ (p-k)!(q-k)! }   \frac{ (2k)! }{ 2^k k! }.
\end{align}
\end{Theorem}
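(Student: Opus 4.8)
The plan is to handle the three claims in the order stated: first the value at $k=0$, then the recurrence \eqref{recurrence}, and finally the closed formula \eqref{A:first formula:intro}, which I would deduce from the recurrence with a direct count as a cross-check. Throughout I assume $q\le p$ as in the definition, and I adopt the vanishing convention $\gamma_{k,p,q}=0$ whenever $p<k$ or $q<k$, since in those cases the prescribed number of $+$'s $(=p-k)$ or of $-$'s $(=q-k)$ would be negative; these degenerate values are exactly what feed the boundary terms of the recurrence. The case $k=0$ is immediate: a signed $(p,q)$-involution with no $2$-cycle is the identity permutation of $\{1,\dots,p+q\}$ together with a choice of which $p$ of its $p+q$ fixed points carry a $+$ sign, and there are $\binom{p+q}{q}$ such choices.

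The core of the argument is a combinatorial proof of \eqref{recurrence} by deleting the largest letter $n=p+q$. Fix $1\le k\le q$ and let $\pi$ be a signed $(p,q)$-involution with exactly $k$ two-cycles. Exactly one of three mutually exclusive situations occurs at $n$: it is a fixed point carrying $+$, it is a fixed point carrying $-$, or it sits in a $2$-cycle $(j\; n)$ with $j<n$. Erasing a $+$-fixed point $n$ produces a signed $(p-1,q)$-involution still having $k$ two-cycles, and this is a bijection onto such objects, contributing $\gamma_{k,p-1,q}$; erasing a $-$-fixed point gives, symmetrically, $\gamma_{k,p,q-1}$. If instead $n$ lies in a $2$-cycle, its partner $j$ may be any of the remaining $p+q-1$ letters, and after deleting both $j$ and $n$ and relabeling the surviving $p+q-2$ letters in an order-preserving way one obtains a signed $(p-1,q-1)$-involution with $k-1$ two-cycles; summing over the $p+q-1$ admissible partners yields the term $(p+q-1)\,\gamma_{k-1,p-1,q-1}$. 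Adding the three contributions gives \eqref{recurrence}.

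Granting the recurrence, I would establish \eqref{A:first formula:intro} by induction on $p+q$ with $k$ held fixed, the base being the $k=0$ evaluation above together with the vanishing convention; the inductive step amounts to checking that the proposed closed form satisfies \eqref{recurrence}, where after factoring the common quantity out of the three summands they reduce to the respective weights $p-k$, $q-k$, and $2k$, whose sum $p+q$ reproduces $\gamma_{k,p,q}$ from the three values one step lower. Independently, and as the most transparent route to the formula itself, one may count directly: select the $2k$ letters lying in two-cycles and match them among themselves in $\binom{p+q}{2k}\,\tfrac{(2k)!}{2^k k!}$ ways, then designate which $p-k$ of the remaining $p+q-2k$ fixed points receive a $+$ in $\binom{p+q-2k}{p-k}$ ways, the product collapsing to the expression in \eqref{A:first formula:intro}. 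I expect the only genuine obstacle to be the careful bookkeeping in the third case of the deletion argument, namely verifying that ``delete $n$ and its partner $j$, then relabel'' is a well-defined $(p+q-1)$-to-one surjection onto signed $(p-1,q-1)$-involutions with $k-1$ two-cycles; once the weight $p+q-1$ is correctly accounted for, both the recurrence and the closed formula follow with only routine algebra.
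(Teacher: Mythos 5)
Your proposal is correct and takes essentially the same approach as the paper: the recurrence is proved by the identical deletion-of-the-largest-letter argument (three cases, with the $(p+q-1)$-fold relabeling bijection onto signed $(p-1,q-1)$-involutions), and the closed form by the same two-stage count that the paper carries out in Section~3 (choose and pair the $2k$ letters in $2$-cycles, then distribute the signs). One caveat worth recording: as typeset, the factor $(2k)!$ in \eqref{A:first formula:intro} is a typo --- your count, like the paper's own body formula \eqref{A:gamma kqp 1}, yields $\gamma_{k,p,q}=\frac{(p+q)!}{(p-k)!\,(q-k)!}\cdot\frac{1}{2^k k!}$, and your inductive weights $p-k$, $q-k$, $2k$ (summing to $p+q$) verify the recurrence precisely for this corrected form, not for the expression with the spurious $(2k)!$.
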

As a consequence of Theorem~\ref{T:first main result:intro}, 
we derive a 3-term recurrence for $\alpha_{p,q}$'s.
\begin{Corollary}\label{C:first main result:intro} 
If $p$ and $q$ are two positive integers, then we have 
$\alpha_{p,0}=\alpha_{0,q} = 1$. 
Furthermore, in this case, the following recurrence relation holds true:
\begin{align}\label{A:ourrec:intro}
\alpha_{p,q} = \alpha_{p-1,q} + \alpha_{p,q-1} + (p+q-1) \alpha_{p-1,q-1}. 
\end{align}
\end{Corollary}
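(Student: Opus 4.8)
The plan is to derive the corollary from Theorem~\ref{T:first main result:intro} by summing the recurrence \eqref{recurrence} over the number of $2$-cycles. Since every signed $(p,q)$-involution has some number $k$ of $2$-cycles, and this number necessarily satisfies $0 \le k \le \min(p,q)$ (each $2$-cycle absorbs one unit from each of $p$ and $q$), partitioning $\si$ by $k$ gives $\alpha_{p,q} = \sum_{k=0}^{\min(p,q)}\gamma_{k,p,q}$. Throughout I adopt the convention $\gamma_{k,p',q'} = 0$ whenever $k < 0$ or $k > \min(p',q')$, i.e.\ whenever the index set is empty.

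First I would dispose of the base cases. When $q = 0$ the only admissible value is $k = 0$, so $\alpha_{p,0} = \gamma_{0,p,0} = \binom{p}{0} = 1$, and symmetrically $\alpha_{0,q} = 1$; here I use the closed form $\gamma_{0,p,q} = \binom{p+q}{q}$ supplied by the theorem. Because $\alpha_{p,q} = \alpha_{q,p}$ (interchanging the $+$ and $-$ signs sets up a bijection between signed $(p,q)$- and signed $(q,p)$-involutions) and the target recurrence \eqref{A:ourrec:intro} is symmetric in $p$ and $q$, I may assume $q \le p$, which places all relevant values of $k$ inside the range $0 \le k \le q$ where the theorem's recurrence applies.

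Next I would check that \eqref{recurrence} in fact remains valid at $k = 0$. There the last term vanishes (since $\gamma_{-1,\cdot,\cdot} = 0$), and the identity reduces to $\binom{p+q}{q} = \binom{p+q-1}{q} + \binom{p+q-1}{q-1}$, which is Pascal's rule. Hence \eqref{recurrence} holds uniformly for $0 \le k \le q$, and I can sum it over this whole range:
\[
\alpha_{p,q} = \sum_{k=0}^{q}\gamma_{k,p,q} = \sum_{k=0}^{q}\gamma_{k,p-1,q} + \sum_{k=0}^{q}\gamma_{k,p,q-1} + (p+q-1)\sum_{k=0}^{q}\gamma_{k-1,p-1,q-1}.
\]

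Finally I would identify each of the three sums with the corresponding $\alpha$. For the first, $\sum_{k=0}^{q}\gamma_{k,p-1,q} = \alpha_{p-1,q}$: if $q \le p-1$ the upper limit already equals $\min(p-1,q)$, while if $q = p$ the extra term $\gamma_{q,p-1,q}$ is zero by the convention. The second sum equals $\alpha_{p,q-1}$ because $\min(p,q-1) = q-1$ and so the tail term $\gamma_{q,p,q-1}$ vanishes. Reindexing the third by $j = k-1$ and dropping the vanishing $j = -1$ term gives $\sum_{j=0}^{q-1}\gamma_{j,p-1,q-1} = \alpha_{p-1,q-1}$ since $\min(p-1,q-1) = q-1$. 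Assembling these yields \eqref{A:ourrec:intro}. The only genuinely delicate point is this last bookkeeping of summation limits: I must confirm that at each boundary the terms lying outside the valid range of $\gamma$ are exactly the ones killed by the zero-convention, so that no contribution is silently gained or lost; the boundary case $q = p$ is where this requires the most care.
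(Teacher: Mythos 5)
Your proposal is correct and takes essentially the same route as the paper: both derive (\ref{A:ourrec:intro}) by summing the three-term recurrence for $\gamma_{k,p,q}$ from Theorem~\ref{T:first main result:intro} over $k$ and checking that the boundary terms balance. The only difference is bookkeeping --- the paper sums over $1\le k\le q-1$ and cancels the leftover $k=0$ and $k=q$ contributions by invoking the $\gamma$-recurrence once more (together with Pascal's rule), whereas you extend the recurrence to $k=0$ via Pascal's rule and sum over the full range $0\le k\le q$ under a zero convention, with the same careful treatment of the $q=p$ boundary.
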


Our recurrence relation (\ref{A:ourrec:intro})
has a variable coefficient and this complicates the computation of the generating
series for $\alpha_{p,q}$'s. However, by translating 
(\ref{A:ourrec:intro}) into a partial differential equation, 
we found the following beautiful closed form of their generating series.
\begin{Theorem}\label{T:gs:intro}
Let $(r,s)$ denote a pair of variables that is related to $(x,y)$ by the 
following coordinate transformations:
\begin{align}\label{R:reversible:intro}
x(r,s)= \frac{r}{rs+1} \qquad \text{and} \qquad y(r,s)=\frac{rs^2 - 2rs + 2s }{2(rs+1)}.
\end{align}
If $v(x,y)$ denotes the function that is represented by the series 
$\sum_{p,q\geq 0} \alpha_{p,q} x^q \frac{y^p}{p!}$ in a sufficiently 
small neighborhood of the origin $(0,0)$, then in the transformed 
coordinates it is equal to the following function:
\begin{align*}
v(r,s) =\frac{ e^{s} (rs+1)}{1-r}.
\end{align*}
\end{Theorem}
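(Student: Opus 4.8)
The plan is to convert the recurrence (\ref{A:ourrec:intro}) together with its boundary conditions into a first-order linear PDE for $v(x,y)$, and then to solve that PDE by the method of characteristics, where the change of variables (\ref{R:reversible:intro}) will emerge as the characteristic coordinate system. Because $v(x,y)=\sum_{p,q\ge 0}\alpha_{p,q}x^q\frac{y^p}{p!}$ is an ordinary generating function in $q$ but an exponential one in $p$, the operator $\partial_y$ shifts $p\mapsto p+1$, multiplication by $x$ shifts $q\mapsto q+1$, and $x\partial_x$, $y\partial_y$ act as the weights $q$ and $p$. Writing the recurrence as $\alpha_{p+1,q}=\alpha_{p,q}+\alpha_{p+1,q-1}+(p+q)\alpha_{p,q-1}$, multiplying by $x^q y^p/p!$, summing over $p\ge 0$, $q\ge 1$, and using $\alpha_{0,q}=\alpha_{p,0}=1$ to handle the boundary terms (the $e^y$ contributions cancel), one is led to
$$(1-x-xy)\,\partial_y v - x^2\,\partial_x v = (1+x)\,v,$$
subject to $v(x,0)=\tfrac{1}{1-x}$ and $v(0,y)=e^y$. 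This bookkeeping, with the coefficient $(p+q-1)$ split as $p+(q-1)$, is the most error-prone step and the one I would check most carefully.

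Next I would solve by characteristics. The system $\frac{dx}{-x^2}=\frac{dy}{1-x-xy}=\frac{dv}{(1+x)v}$ has an invariant obtained from the first pair: the linear ODE $y'-\tfrac{y}{x}=\tfrac1x-\tfrac1{x^2}$ integrates (integrating factor $1/x$) to show that $\tfrac{y}{x}+\tfrac1x-\tfrac1{2x^2}$ is constant along characteristics. I would then verify that under (\ref{R:reversible:intro}) one has $\tfrac1x=s+\tfrac1r$ and that this invariant equals $\tfrac1r-\tfrac1{2r^2}$, a function of $r$ alone; hence the curves $r=\text{const}$ are exactly the characteristics, $s$ is the parameter along them, and $\partial_s x=-x^2$. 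In these coordinates the transport equation collapses to the ODE $\partial_s\log v = 1+x = 1+\tfrac{r}{rs+1}$, whose solution is $v=e^{s}(rs+1)\,H(r)$ for a function $H$ of $r$ alone. I would also record that the Jacobian of (\ref{R:reversible:intro}) at the origin is the identity ($x\approx r$, $y\approx s$), so the map is invertible near $(0,0)$, justifying the reversibility claim.

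Finally I would fix $H$ from the initial data. The line $y=0$ corresponds to the branch $s=0$, along which $x=r$, so the condition $v(x,0)=\tfrac{1}{1-x}$ becomes $H(r)=\tfrac{1}{1-r}$, yielding $v(r,s)=\tfrac{e^{s}(rs+1)}{1-r}$. The main obstacle I anticipate is twofold: deriving the PDE exactly from the mixed ordinary/exponential generating function, and justifying that the characteristic solution is the correct power-series solution near the origin — that is, that $\{y=0\}$ is non-characteristic (it is, being transverse to the curves $r=\text{const}$) and that the PDE together with the single Cauchy curve $v(x,0)=\tfrac{1}{1-x}$ determines the coefficients uniquely, which holds precisely because the PDE reproduces the recurrence (\ref{A:ourrec:intro}).
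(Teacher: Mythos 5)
Your proposal is correct and follows essentially the same route as the paper: the recurrence is converted into the identical PDE $(-x^2)\,\partial_x v+(1-x-xy)\,\partial_y v=(1+x)v$ with data $v(x,0)=\frac{1}{1-x}$, $v(0,y)=e^y$, and solved by characteristics with the same parametrization $x(r,0)=r$, $y(r,0)=0$, $v(r,0)=\frac{1}{1-r}$, giving $v(r,s)=\frac{e^{s}(rs+1)}{1-r}$. Your use of the invariant $\frac{y}{x}+\frac{1}{x}-\frac{1}{2x^2}$ to confirm that $r=\mathrm{const}$ are the characteristics, and your remarks on invertibility and uniqueness, are minor refinements of the paper's argument rather than a different method.
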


The relations in (\ref{R:reversible:intro}) are invertible, therefore, by 
back substitution, we obtain a bivariate generating series for $\alpha_{p,q}$'s.

\begin{Corollary}\label{C:genfun:intro}
The bivariate generating series for $\alpha_{p,q}$'s is given by 
\begin{align}\label{A:substitute:intro}
\sum_{p,q\geq 0} \alpha_{p,q} x^q \frac{y^p}{p!}
&=\frac{e^{\frac{1-x-\sqrt{x^2-2xy-2x+1}}{x}} (x-\sqrt{x^2-2xy-2x+1})}{-x^2+2xy+2x-1+x\sqrt{x^2-2xy-2x+1}}.
\end{align}
\end{Corollary}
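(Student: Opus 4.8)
The plan is to invert the coordinate change (\ref{R:reversible:intro}) explicitly and substitute the resulting expressions for $r$, $s$, and $rs+1$ into the closed form $v(r,s)=\frac{e^{s}(rs+1)}{1-r}$ supplied by Theorem~\ref{T:gs:intro}. The whole corollary is therefore a computation, but it is organized around one clean observation: the auxiliary quantity $u:=rs+1$ satisfies a quadratic equation whose coefficients are polynomials in $x$ and $y$ alone.

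First I would introduce $u=rs+1$ and read off from the first relation in (\ref{R:reversible:intro}) that $r=xu$. Substituting $r=xu$ and $s=(u-1)/(xu)$ into the second relation and clearing denominators, I expect to arrive at
\begin{equation*}
(2xy+2x-1)\,u^2-2x\,u+1=0.
\end{equation*}
Solving this quadratic gives $u=\frac{x\pm\sqrt{x^2-2xy-2x+1}}{2xy+2x-1}$, and the branch is pinned down by the requirement that $u=rs+1\to 1$ as $(x,y)\to(0,0)$, since $r,s\to 0$ there. Setting $D:=\sqrt{x^2-2xy-2x+1}$, the value $D\to 1$ at the origin forces the minus sign, so that $u=\frac{x-D}{2xy+2x-1}$.

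With $u$ in hand the remaining pieces fall out: $rs+1=u$, then $1-r=1-xu=\frac{-x^2+2xy+2x-1+xD}{2xy+2x-1}$, and $s=\frac{u-1}{xu}$. The one place where a short identity is needed is the simplification of $s$. Using $D^2=x^2-2xy-2x+1$ one checks that $(1-x-D)(x-D)=1-x-2xy-D$, which collapses $s$ to the clean form $s=\frac{1-x-D}{x}$, matching the exponent in (\ref{A:substitute:intro}). Substituting $e^{s}$, $rs+1=u$, and $1-r$ into $v=\frac{e^{s}(rs+1)}{1-r}$, the common factor $2xy+2x-1$ cancels between numerator and denominator and yields exactly the right-hand side of (\ref{A:substitute:intro}).

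The only genuinely delicate point is analytic rather than algebraic: one must be sure the transformation (\ref{R:reversible:intro}) is a biholomorphism near the origin, so that substitution into the power series defining $v$ is legitimate and the branch of $D$ chosen above is the one analytic at $(0,0)$. I would settle this by computing the Jacobian of $(r,s)\mapsto(x,y)$ at the origin and checking it is nonzero; concretely, the leading terms are $x\approx r$ and $y\approx s$, so the Jacobian is the identity to first order. This guarantees a unique analytic inverse and justifies the branch selection made through the limit $u\to 1$, completing the back substitution.
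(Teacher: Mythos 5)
Your argument is correct, and every step checks out: substituting $r=xu$, $s=(u-1)/(xu)$ into the second relation of (\ref{R:reversible:intro}) indeed yields $2xy\,u^2=u^2-1-2xu(u-1)$, i.e.\ $(2xy+2x-1)u^2-2xu+1=0$; the limit $u\to 1$ at the origin forces the minus branch (the plus branch gives $u\to -1$); and with $D^2=x^2-2xy-2x+1$ the identity $(1-x-D)(x-D)=x-x^2-D+D^2=1-x-2xy-D$ collapses $s$ to $\frac{1-x-D}{x}$, after which the factor $2xy+2x-1$ cancels and (\ref{A:substitute:intro}) drops out. However, this is genuinely \emph{not} the paper's route. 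The paper never inverts the characteristic coordinates: its proof returns to the PDE (\ref{A:PDE}), posits the general solution $S(x,y)=\frac{e^{1/x}}{x}F\bigl(\frac{2xy+2x-1}{2x^2}\bigr)$ with $F$ an arbitrary one-variable function (verified by substitution), pins down $F$ from the boundary condition $v(x,0)=\frac{1}{1-x}$ by inverting the one-variable map $z=\frac{2x-1}{2x^2}$ via $x=\frac{1-\sqrt{1-2z}}{2z}$, and then substitutes back. Curiously, your approach is the one the paper's introduction advertises (``the relations in (\ref{R:reversible:intro}) are invertible, therefore, by back substitution\dots''), so you have supplied the proof the reader is led to expect. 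Comparing the two: the paper's method gives an independent consistency check (the ansatz can be verified against the PDE directly, without trusting the characteristics computation), but it leaves its own branch choice for $\sqrt{1-2z}$ unjustified; your method is more self-contained given Theorem~\ref{T:gs:intro}, and it makes explicit precisely the two points the paper glosses over --- which branch of the square root is analytic at the origin, and why substitution into the power series is legitimate (your Jacobian observation $x\approx r$, $y\approx s$ to first order settles this).
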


\vspace{.5cm}

Next, we will discuss the relationship between 
$S(GL_p\times GL_q)$-orbits in $\ms{B}_n$ and certain weighted lattice paths.
The precursor of our development is the well-known relationship between the 
lattice paths in a $p\times q$-grid and the $B_n$-orbits in the 
Grassmann variety of $p$-dimensional subspaces in an $n$-dimensional 
complex vector space. More detailed explanation of this classical result on
Schubert varieties will be given in the preliminaries section. 
See Remark~\ref{R:GrassmanPaths}.

The {\em $(p,q)$-th Delannoy number}, denoted by $D(p,q)$, 
is defined via the recurrence relation
\begin{align}\label{D:basic recurrence}
D(p,q)=D(p-1,q)+D(p,q-1)+D(p-1,q-1)
\end{align}
with respect to the initial conditions $D(p,0)=D(0,q)=D(0,0)=1$. 
Loosely speaking, these numbers give a 
count of the lattice paths that move with 
unit east step, $E:=(1,0)$; unit north step $N:=(0,1)$; 
and the diagonal step $D:=(1,1)$.
More precisely, $D(p,q)$ is the number of lattice paths
that start at the origin $(0,0)\in \N^2$ and end at $(p,q)\in \N^2$
moving with $E,N$, and $D$ steps only. 
We will refer to such paths as {\em $(p,q)$ Delannoy paths},
and denote their collection by $\mc{D}(p,q)$.
For example, if $(p,q)=(2,2)$, then $D(2,2)=13$.
In Figure~\ref{F:22}, we depicted the elements of $\mc{D}(2,2)$.

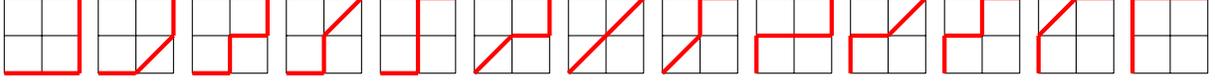
\begin{figure}[h]
\begin{center}
\begin{tikzpicture}[scale=.5]

\begin{scope}[xshift=-15cm]
\draw (0,0) grid (2,2);
\draw[ultra thick,red] (0,0) to (2,0);
\draw[ultra thick,red] (2,0) to (2,2);
\end{scope}

\begin{scope}[xshift=-12.5cm]
\draw (0,0) grid (2,2);
\draw[ultra thick,red] (0,0) to (1,0);
\draw[ultra thick,red] (1,0) to (2,1);
\draw[ultra thick,red] (2,1) to (2,2);
\end{scope}

\begin{scope}[xshift=-10cm]
\draw (0,0) grid (2,2);
\draw[ultra thick,red] (0,0) to (1,0);
\draw[ultra thick,red] (1,0) to (1,1);
\draw[ultra thick,red] (1,1) to (2,1);
\draw[ultra thick,red] (2,1) to (2,2);
\end{scope}

\begin{scope}[xshift=-7.5cm]
\draw (0,0) grid (2,2);
\draw[ultra thick,red] (0,0) to (1,0);
\draw[ultra thick,red] (1,0) to (1,1);
\draw[ultra thick,red] (1,1) to (2,2);
\end{scope}

\begin{scope}[xshift=-5cm]
\draw (0,0) grid (2,2);
\draw[ultra thick,red] (0,0) to (1,0);
\draw[ultra thick,red] (1,0) to (1,1);
\draw[ultra thick,red] (1,1) to (1,2);
\draw[ultra thick,red] (1,2) to (2,2);
\end{scope}

\begin{scope}[xshift=-2.5cm]
\draw (0,0) grid (2,2);
\draw[ultra thick,red] (0,0) to (1,1);
\draw[ultra thick,red] (1,1) to (2,1);
\draw[ultra thick,red] (2,1) to (2,2);
\end{scope}

\begin{scope}[xshift=0cm]
\draw (0,0) grid (2,2);
\draw[ultra thick,red] (0,0) to (2,2);
\end{scope}

\begin{scope}[xshift=2.5cm]
\draw (0,0) grid (2,2);
\draw[ultra thick,red] (0,0) to (1,1);
\draw[ultra thick,red] (1,1) to (1,2);
\draw[ultra thick,red] (1,2) to (2,2);
\end{scope}

\begin{scope}[xshift=5cm]
\draw (0,0) grid (2,2);
\draw[ultra thick,red] (0,0) to (0,1);
\draw[ultra thick,red] (0,1) to (2,1);
\draw[ultra thick,red] (2,1) to (2,2);
\end{scope}

\begin{scope}[xshift=7.5cm]
\draw (0,0) grid (2,2);
\draw[ultra thick,red] (0,0) to (0,1);
\draw[ultra thick,red] (0,1) to (1,1);
\draw[ultra thick,red] (1,1) to (2,2);
\end{scope}

\begin{scope}[xshift=10cm]
\draw (0,0) grid (2,2);
\draw[ultra thick,red] (0,0) to (0,1);
\draw[ultra thick,red] (0,1) to (1,1);
\draw[ultra thick,red] (1,1) to (1,2);
\draw[ultra thick,red] (1,2) to (2,2);
\end{scope}

\begin{scope}[xshift=12.5cm]
\draw (0,0) grid (2,2);
\draw[ultra thick,red] (0,0) to (0,1);
\draw[ultra thick,red] (0,1) to (1,2);
\draw[ultra thick,red] (1,2) to (2,2);
\end{scope}

\begin{scope}[xshift=15cm]
\draw (0,0) grid (2,2);
\draw[ultra thick,red] (0,0) to (0,2);
\draw[ultra thick,red] (0,2) to (2,2);
\end{scope}

\end{tikzpicture}

\end{center}
\caption{$(2,2)$ Delannoy paths}
\label{F:22}
\end{figure}

Let $L$ be a Delannoy path that ends at the 
lattice point $(p,q)\in \N$. 
We agree to represent $L$ as a word 
$L_1L_2\dots L_r$, where each $L_i$ $(i=1,\dots, r)$
is a pair of lattice points, say $L_i=((a,b),(c,d))$,
and $(c-a,d-b)\in \{N,E,D\}$. 
In this notation, we define the weight of $L_i$, the 
$i$-th step, as follows: 
\begin{align*}
weight(L_i) = 
\begin{cases}
1 & \text{ if } L_i =((a,b),(a+1,b)); \\
1 & \text{ if } L_i =((a,b),(a,b+1)); \\
a+b+1 & \text{ if } L_i= ((a,b),(a+1,b+1)).  
\end{cases}
\end{align*}
Finally, we define the weight of $L$, denoted by $\omega(L)$ 
as the product of the weights of its steps:
\begin{align}\label{A:weightofL}
\omega(L)  = weight(L_1) weight(L_2)\cdots weight(L_r).
\end{align}

\begin{Example}
Let $L$ denote the Delannoy path that is depicted in Figure~\ref{F:2D}.
In this case, the weight of $L$ is $\omega(L) = 3\cdot 6\cdot 8 = 144$.
\begin{figure}[h]
\begin{center}
\begin{tikzpicture}[scale=.65]
\begin{scope}
\node at (-1,3) {$L=$};
\draw (0,0) grid (4,6);
\draw[ultra thick,red] (0,0) to (0,1) node[anchor=north east] {1};
\draw[ultra thick,red] (0,1) to (0,2) node[anchor=north east] {1};
\draw[ultra thick,red] (0,2) to (1,3) node[anchor=north] {3};
\draw[ultra thick,red] (1,3) to (2,3) node[anchor=south east] {1};
\draw[ultra thick,red] (2,3) to (3,4) node[anchor=north] {6};
\draw[ultra thick,red] (3,4) to (4,5) node[anchor=north] {8};
\draw[ultra thick,red] (4,5) to (4,6) node[anchor=north west] {1};
\end{scope}
\end{tikzpicture}
\end{center}
\caption{A $(4,6)$ Delannoy path with weights on its steps.}
\label{F:2D}
\end{figure}
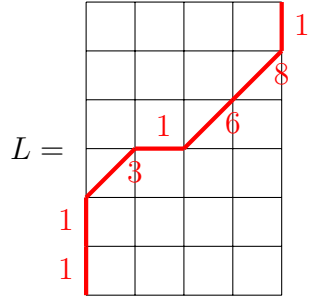

\end{Example}

\begin{Proposition}\label{P:anotherpaththeorem}
Let $p$ and $q$ be two nonnegative integers and 
let $\mc{D}(p,q)$ denote the corresponding set of Delannoy 
paths. 
In this case, we have 
$$
\alpha_{p,q} = \sum_{L\in \mc{D}(p,q)} \omega(L).
$$
\end{Proposition}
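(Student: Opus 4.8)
The plan is to show that the weighted path sum satisfies exactly the same recurrence and boundary conditions as the sequence $\alpha_{p,q}$, and then to invoke uniqueness. Concretely, I would set
$$
W(p,q) := \sum_{L\in\mc{D}(p,q)} \omega(L)
$$
and prove that $W(p,q) = \alpha_{p,q}$ by induction on $p+q$, using the three-term recurrence from Corollary~\ref{C:first main result:intro}. Since that corollary determines the numbers $\alpha_{p,q}$ uniquely from the data $\alpha_{p,0}=\alpha_{0,q}=1$ together with the recurrence~(\ref{A:ourrec:intro}), it suffices to verify that $W$ obeys the identical initial conditions and the identical recurrence.

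For the base cases, I would observe that there is a unique Delannoy path from $(0,0)$ to $(p,0)$, namely the one consisting of $p$ consecutive east steps: any $N$ or $D$ step would raise the $y$-coordinate, which can never be lowered again, so the terminal height $0$ forces all steps to be $E$. Each east step has weight $1$, hence $W(p,0) = 1 = \alpha_{p,0}$, and the same argument with north steps gives $W(0,q) = 1 = \alpha_{0,q}$.

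For the recurrence, fix positive integers $p,q$ and classify each path $L\in\mc{D}(p,q)$ by its final step $L_r$, which must terminate at $(p,q)$ and is therefore one of three types. If $L_r$ is an east step $((p-1,q),(p,q))$, then $weight(L_r)=1$ and deleting $L_r$ yields a bijection with $\mc{D}(p-1,q)$; a final north step likewise contributes a factor $1$ and matches $\mc{D}(p,q-1)$. If $L_r$ is the diagonal step $((p-1,q-1),(p,q))$, then by definition $weight(L_r) = (p-1)+(q-1)+1 = p+q-1$, and removing it gives a bijection with $\mc{D}(p-1,q-1)$. Because $\omega$ is multiplicative across steps (see~(\ref{A:weightofL})), summing over the three classes yields
$$
W(p,q) = W(p-1,q) + W(p,q-1) + (p+q-1)\,W(p-1,q-1),
$$
which is precisely~(\ref{A:ourrec:intro}), closing the induction.

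The argument is short, and there is no serious obstacle; the one point that must be checked with care is the evaluation of the diagonal weight. The recurrence for $\alpha_{p,q}$ carries the variable coefficient $p+q-1$, and it is exactly the prescribed weight $a+b+1$ of a diagonal step issued from the lattice point $(a,b)=(p-1,q-1)$ that reproduces this coefficient. In other words, the unusual choice of weights in~(\ref{A:weightofL}) is engineered so that the last-step decomposition of weighted Delannoy paths mirrors the combinatorial recurrence for signed involutions, and verifying this match is the crux of the proof.
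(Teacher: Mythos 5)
Your proof is correct and follows essentially the same route as the paper's: both decompose $\mc{D}(p,q)$ by the type of the final step, note that deleting a final $N$ or $E$ step is weight-preserving while deleting the final diagonal step $((p-1,q-1),(p,q))$ divides the weight by exactly $p+q-1$, and conclude that the weighted sum satisfies the recurrence of Corollary~\ref{C:first main result:intro} with the same boundary values. Your explicit verification of the base cases $W(p,0)=W(0,q)=1$ is slightly more detailed than the paper's (which simply asserts them, along with $\alpha_{1,1}$), but the argument is the same.
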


Although Proposition~\ref{P:anotherpaththeorem}
gives an expression for $\alpha_{p,q}$'s as a combinatorial sum,
it does not give a set of combinatorial objects whose
cardinality is given by $\alpha_{p,q}$. Our next result 
offers such an interpretation. 
\begin{Definition}
A $k$-diagonal step (in $\N^2$) is a diagonal step $L$ of the form 
$L=((a,b),(a+1,b+1))$, where $a,b\in \N$ and $k=a+b+1$.
(For an example, see Figure~\ref{F:4steps}.)
\end{Definition}
\begin{figure}[h]
\begin{center}
\begin{tikzpicture}[scale=.65]
\begin{scope}
\node at (-1,3) {$L=$};
\draw (0,0) grid (5,5);
\draw[ultra thick,red] (3,0) to (4,1);
\draw[ultra thick,red] (2,1) to (3,2);
\draw[ultra thick,red] (1,2) to (2,3);
\draw[ultra thick,red] (0,3) to (1,4);
\end{scope}
\end{tikzpicture}
\end{center}
\caption{The $4$-diagonal steps in $\N^2$.}
\label{F:4steps}
\end{figure}
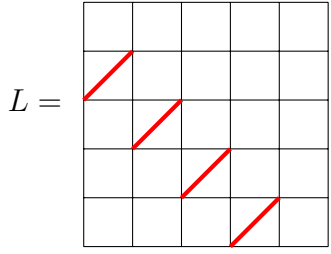

\vspace{1cm}

We proceed to define the ``weighted Delannoy paths.''
\begin{Definition}
By a labelled step we mean a pair $(K,m)$,
where $K\in \{N,E,D\}$ and $m$ is a positive integer
such that $m=1$ if $K=N$ or $K=E$. 
A weighted $(p,q)$ Delannoy path is a word of the form 
$W:=K_1\dots K_r$, where $K_i$'s $(i=1,\dots, r)$ 
are labeled steps $K_i=(L_i,m_i)$ such that 
\begin{itemize}
\item $L_1\dots L_r$ is a Delannoy path from $\mc{D}(p,q)$;
\item if $L_i$ ($1\leq i \leq r$) is a $k$-th 
diagonal step, then $1\leq m_i \leq  k-1$.
\end{itemize}
The set of all weighted $(p,q)$ Delannoy 
paths is denoted by $\mc{P}(p,q)$. 
\end{Definition}

\begin{Theorem}\label{T:anotherpaththeorem:intro}
There is a bijection between the set of weighted $(p,q)$ Delannoy 
paths and the set of signed $(p,q)$-involutions.
In particular, we have 
$$
\alpha_{p,q} = \sum_{W\in \mc{P}(p,q)} 1.
$$
\end{Theorem}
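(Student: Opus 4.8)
The plan is to construct an explicit bijection between $\mc{P}(p,q)$ and $\si$, and then verify it is well-defined and invertible. The key observation is that Proposition~\ref{P:anotherpaththeorem} already expresses $\alpha_{p,q}$ as a weighted sum $\sum_{L\in \mc{D}(p,q)} \omega(L)$, where the weight of a $k$-diagonal step is $k = a+b+1$. The definition of a weighted Delannoy path is engineered so that each $k$-diagonal step $L_i$ carries a label $m_i$ with $1\le m_i \le k-1$, contributing a factor of $k-1$ choices. At first glance this produces a factor $(k-1)$ rather than the factor $k$ appearing in $\omega(L)$, so the enumeration $|\mc{P}(p,q)| = \sum_{L} \prod (\text{number of labels}) = \sum_L \prod_{k\text{-diag}} (k-1)$ does not literally match Proposition~\ref{P:anotherpaththeorem}. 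This discrepancy is the crux, and resolving it correctly is where I expect the main difficulty to lie: the bijection must account for the apparent off-by-one between $k$ and $k-1$, presumably because the combinatorial meaning of a diagonal step corresponds to forming a $2$-cycle, and the number of genuinely new pairing choices available at a given stage is one less than the naive weight, the remaining possibility being absorbed into a sign assignment or a non-diagonal configuration.

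First I would fix the dictionary between the geometric data of a path and the algebraic data of a signed involution. Reading a Delannoy path step by step, I would interpret the sequence of $E$, $N$, and $D$ steps as instructions for building up $\pi \in \si$ one element at a time: an $E$ step corresponds to adjoining a fixed point carrying a $+$ sign, an $N$ step to adjoining a fixed point carrying a $-$ sign, and a $D$ step (which advances both coordinates, hence increases both $p$ and $q$ counts) to adjoining a $2$-cycle, consistent with the fact that a $2$-cycle contributes one unit to each of $p$ and $q$. This matches the recurrence in Theorem~\ref{T:first main result:intro}: the three terms $\gamma_{k,p-1,q}$, $\gamma_{k,p,q-1}$, and $(q+p-1)\gamma_{k-1,p-1,q-1}$ correspond exactly to an $E$ step, an $N$ step, and a weighted $D$ step. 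I would make this precise by induction on $p+q$, showing that the last step of the path determines the last structural feature added to $\pi$ and that the coefficient $q+p-1$ counts precisely the ways to pair the new element into a $2$-cycle with one of the previously placed elements.

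Next I would pin down exactly what the label $m_i$ on a $k$-diagonal step records. When a $D$ step occurs at position $(a,b) \to (a+1,b+1)$ with $k=a+b+1$, there are $a+b$ elements already placed (namely $k-1$ of them), and the new $2$-cycle can be formed by pairing the newly introduced element with any one of these $k-1$ previously placed elements, together with the newly introduced one; the label $m_i \in \{1,\dots,k-1\}$ should record which of these earlier elements is chosen as the partner. This makes the count of labels per $k$-diagonal step equal to $k-1$, so $|\mc{P}(p,q)| = \sum_{L\in\mc{D}(p,q)} \prod_{k\text{-diag steps of }L}(k-1)$. I would then reconcile this with Proposition~\ref{P:anotherpaththeorem} by re-examining the weight of a $D$ step: the factor $a+b+1 = k$ in $\omega(L)$ must itself be re-derived in the same element-insertion model, and I expect that the correct count of ways to extend a signed involution on $k-1$ elements by a $2$-cycle involving a new $k$-th element is indeed $k-1$ (choosing the partner from the existing $k-1$ elements), which would mean the weight $\omega(L)$ in Proposition~\ref{P:anotherpaththeorem} should be read through a shifted indexing; I would take care to track the base cases and the exact range of summation so that the two counts agree.

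Finally I would verify the bijection is genuinely a bijection. In the forward direction, given $W = K_1\cdots K_r \in \mc{P}(p,q)$, the underlying path $L_1\cdots L_r$ together with the labels $m_i$ determines a unique signed involution by the insertion procedure above; I must check it lands in $\si$, i.e. that the resulting involution has the correct excess of $+$ over $-$ signs (which follows since the number of $E$ steps minus the number of $N$ steps equals $p-q$ after accounting for the diagonal steps contributing equally to both). For injectivity and surjectivity I would describe the inverse: given $\pi\in\si$, read off its structure in the canonical order on $\{1,\dots,n\}$ to recover the step sequence, and for each $2$-cycle recover the label from the position of the smaller element relative to the elements placed before the larger one. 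The routine but necessary check is that these two maps are mutually inverse, which reduces to confirming that the insertion order and the label-extraction are consistent; this is the bookkeeping step and I do not expect conceptual obstacles there once the label convention is fixed. The displayed identity $\alpha_{p,q} = \sum_{W\in\mc{P}(p,q)} 1$ then follows immediately from the existence of the bijection.
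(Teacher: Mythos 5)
You have the right dictionary ($E$ = new fixed point with a $+$, $N$ = new fixed point with a $-$, $D$ = new $2$-cycle, matched against the recurrence of Theorem~\ref{T:first main result:intro}), and this is indeed the paper's route: its proof removes the largest letter $n$ of $\pi\in\si$, draws an $E$-, $N$-, or labelled $D$-step accordingly, renormalizes the remaining letters, and iterates, reusing the maps $\psi(\pm),\psi(i)$ from the proof of Theorem~\ref{T:rec}. But your resolution of the ``off-by-one,'' which you correctly identify as the crux, goes in the wrong direction. A $D$-step from $(a,b)$ to $(a+1,b+1)$ increases the number of underlying letters by \emph{two}: a path to $(p,q)$ with $d$ diagonal steps has $p-d$ east and $q-d$ north steps, so each $D$-step must account for two of the $n=p+q$ letters. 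Hence the $2$-cycle created by a $D$-step does not pair ``the newly introduced element with one of the $k-1$ previously placed elements'' --- \emph{both} entries of the $2$-cycle are new. One inserts the pair $(i,m)$ where $m=a+b+2$ is the new maximum and the partner $i$ may be any value in $\{1,\dots,a+b+1\}$ (all existing letters $\geq i$ being shifted up by one), giving $a+b+1$ choices, not $a+b$. Your own sketch is internally inconsistent on this point: you say the coefficient $q+p-1$ counts pairings of the new element with previously placed elements, yet at that stage only $q+p-2$ elements have been placed. Worse, pairing the new letter with an already placed (hence already signed) fixed point would retroactively alter the $(p,q)$ bookkeeping, so your insertion model does not even define a map into $\si$.

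Consequently your proposed reconciliation --- rereading the weight in Proposition~\ref{P:anotherpaththeorem} ``through a shifted indexing'' so that a diagonal step effectively contributes $k-1$ --- contradicts that proposition, whose weight $a+b+1$ is forced by the coefficient $p+q-1$ in the recurrence. The true resolution is on the other side: the label range in the definition of $\mc{P}(p,q)$ must be read so that a diagonal step $((a,b),(a+1,b+1))$ admits exactly $a+b+1$ labels. (The paper's literal wording ``$k$-th diagonal step, $1\leq m_i\leq k-1$'' with $k=a+b+1$ is an off-by-one glitch --- taken literally it would forbid any initial diagonal step --- and Figure~\ref{F:LPaths} confirms the intended convention: the step into $(2,2)$ occurs with labels $1,2,3$.) With that reading, the number of labelled paths lying over a fixed $L\in\mc{D}(p,q)$ is exactly $\omega(L)$, there is no discrepancy left to ``absorb into a sign assignment,'' and the theorem follows at once from Proposition~\ref{P:anotherpaththeorem}, or directly from the bijection with label $m_i=i$ recording the inserted partner, as in the paper. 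As written, however, your construction counts $\sum_{L}\prod(k-1)\neq\alpha_{p,q}$, so no bijection of the kind you describe can exist; the gap is real and lies precisely in the semantics of the $D$-step.
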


\vspace{.5cm}

As an application of our combinatorial results,
we compute the generating series
of the dimensions of $S(GL_p\times GL_q)$-orbits in $\ms{B}_n$.
To this end, we define $[m]:=1+\cdots + t^{m-1}$, where $m$ 
is a positive integer. 
Let $\mc{L}(\ms{B}_n)$ denote the set of all orbits of $S(GL_p\times GL_q)$ 
in $\ms{B}_n$. Finally, we define 
\begin{align}\label{A:E:intro}
E_{p,q}(t) := \sum_{Y\in \mc{L}(\mc{B}_n)} t^{\dim Y- \frac{3pq-q^2}{2}}.
\end{align}

({\em Note:} The (strange) shift on the exponents 
in (\ref{A:E:intro}) will be explained in Remark~\ref{R:explained}.)

\begin{Theorem}\label{T:lengthgf:intro}
Let $p$ and $q$ be two positive integers and let $E_{p,q}(t)$ be as in (\ref{A:E:intro}).
In this case, the polynomials $E_{p,q}(t)$'s satisfy the following 3-term recurrence:
$$
E_{p,q}(t) = E_{p-1,q}(t) + E_{p,q-1}(t) + ([p+q]-1) E_{p-1,q-1}(t).
$$
\end{Theorem}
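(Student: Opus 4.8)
The plan is to read $E_{p,q}(t)$ as a generating polynomial over signed involutions rather than over orbits. Under the parametrization $\mathcal{L}(\ms{B}_n)\cong \si$ each orbit $Y$ corresponds to a $\pi\in\si$, and the orbit dimension $\dim Y$ is a known combinatorial statistic $d(\pi)$ on $\pi$ (the length/rank function for these clans, from the preliminaries and the references \cite{MO,Wyser,CU}). Writing $\delta_{p,q}:=\tfrac{3pq-q^2}{2}$ for the normalizing shift of (\ref{A:E:intro}), this gives $E_{p,q}(t)=\sum_{\pi\in\si}t^{\,d(\pi)-\delta_{p,q}}$. Setting $t=1$ collapses this to $|\si|=\alpha_{p,q}$, so Theorem~\ref{T:lengthgf:intro} is precisely the length-refinement of Corollary~\ref{C:first main result:intro}: the entire task is to reprove the three-term recurrence while keeping track of the exponent $d(\pi)-\delta_{p,q}$, whose nonnegativity and integrality are exactly what Remark~\ref{R:explained} is meant to certify.

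The engine is the same ``genesis'' decomposition that produces the counting recurrence (\ref{recurrence}): partition $\si$ according to the role of the largest letter $n=p+q$. If $n$ is a $+$ (resp.\ $-$) fixed point, deleting it gives a bijection onto $I_{p-1,q}^\pm$ (resp.\ $I_{p,q-1}^\pm$). If $n$ lies in a $2$-cycle $(j\,n)$, then for each fixed partner $j\in\{1,\dots,n-1\}$ deleting this $2$-cycle and order-preservingly relabelling the remaining $n-2$ letters yields a bijection onto $I_{p-1,q-1}^\pm$. At $t=1$ these three branches reproduce $\gamma_{k,p-1,q}+\gamma_{k,p,q-1}+(p+q-1)\gamma_{k-1,p-1,q-1}$, and summing over $k$ recovers (\ref{A:ourrec:intro}); hence the remaining content is purely the behavior of $d$.

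Next I would compute the change of $d$ along each branch. The point of the shift $\delta_{p,q}$ is that it is calibrated so that the two fixed-point branches contribute with no extra power of $t$, i.e.\ $d(\pi)-\delta_{p,q}=d(\pi')-\delta_{p-1,q}$ (resp.\ $\delta_{p,q-1}$) under the respective bijection, so those branches give $E_{p-1,q}(t)$ and $E_{p,q-1}(t)$ with constant coefficient $1$. For the $2$-cycle branch I would show that $\bigl(d(\pi)-\delta_{p,q}\bigr)-\bigl(d(\pi')-\delta_{p-1,q-1}\bigr)$ depends only on the partner $j$, and that as $j$ runs over $\{1,\dots,p+q-1\}$ this difference runs \emph{bijectively and without gaps} over $\{1,2,\dots,p+q-1\}$. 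Summing $t^{\,d(\pi)-\delta_{p,q}}$ over the $p+q-1$ choices of $j$ then produces the factor $t+t^2+\cdots+t^{p+q-1}=[p+q]-1$ multiplying $E_{p-1,q-1}(t)$, and assembling the three branches gives the stated recurrence.

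The main obstacle is exactly this dimension bookkeeping in the $2$-cycle branch: one must read off from the explicit clan-dimension formula that inserting the pair $(j\,n)$ raises the shifted length by a strictly increasing, gap-free function of $j$ hitting each value $1,\dots,p+q-1$ once, and that the fixed-point branches are length-neutral after shifting; this is where the otherwise opaque normalization $\tfrac{3pq-q^2}{2}$ earns its keep, with the specialization $t=1$ serving as a constant consistency check against the variable coefficient $p+q-1$ of (\ref{A:ourrec:intro}). A clean cross-check, which I would also carry out, routes everything through the weighted Delannoy path bijection of Theorem~\ref{T:anotherpaththeorem:intro}: equip a weighted path with the $t$-weight recording the $\dim Y$ statistic and decompose by the final step. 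A terminal east or north step strips off to a path in $\mc{P}(p-1,q)$ or $\mc{P}(p,q-1)$ with no weight change, while a terminal diagonal step into $(p,q)$ comes from $\mc{P}(p-1,q-1)$ and carries labels weighted $t^1,\dots,t^{p+q-1}$; the Delannoy recurrence (\ref{D:basic recurrence}) then delivers the three-term recurrence directly.
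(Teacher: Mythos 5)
Your proposal takes essentially the same route as the paper's proof: after using the shift $\tfrac{3pq-q^2}{2}$ to rewrite $E_{p,q}(t)=\sum_{\pi\in I_{p,q}^{\pm}}t^{\mathbb{L}(\pi)}$ (Remark~\ref{R:explained}), the paper applies the same largest-letter decomposition $\psi$ from Theorem~\ref{T:rec} and checks by a direct inversion count that deleting a fixed point leaves $\mathbb{L}$ unchanged while deleting the $2$-cycle $(i,n)$ lowers it by exactly $n-i$, so as $i$ runs over $\{1,\dots,n-1\}$ the exponent shifts sweep $\{1,\dots,p+q-1\}$ without gaps and produce the factor $t+t^2+\cdots+t^{p+q-1}=[p+q]_t-1$. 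The one step you flag as the main obstacle is precisely this short computation, carried out in the paper as equations (\ref{A:pm case}) and (\ref{A:i case}), so your plan is correct and matches the paper's argument.
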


We finish our introduction by briefly explaining  
the structure of our article. In Section~\ref{S:Preliminaries}
we introduce our basic notation regarding permutations
and involutions. In particular we introduce the 
length function of the Bruhat order on involutions, 
and give information about the Schubert varieties of grassmannians. 
The purpose of Section~\ref{S:Signed involutions} is to 
make an introduction to signed involutions and their generating 
series. The Section~\ref{S:Recurrence} is devoted to the
study of recurrence relations of $\gamma_{k,p,q}$'s and
$\alpha_{p,q}$'s; the proofs of Theorem~\ref{T:first main result:intro} 
and its Corollary~\ref{C:first main result:intro} are presented 
in that section. The generating series of $\alpha_{p,q}$'s are 
studied in Section~\ref{S:Generating}, where we prove 
Theorem\ref{T:gs:intro} and its Corollary~\ref{C:genfun:intro}.
Our main combinatorial interpretation for the numbers
$\alpha_{p,q}$'s are recorded in Section~\ref{S:Interpretation}.
The final Section~\ref{S:Analogs} is devoted to the study 
of various generating functions. We give a proof of 
Theorem~\ref{T:lengthgf:intro}, and, in addition, 
we introduce and study two more generating polynomials,
which are defined as follows:
$$
D_{p,q}(t)= \frac{1}{t} \sum_{L\in \mc{D}(p,q)} t^{\omega(L)} \qquad
\text{and}\qquad A_{p,q}(t) = \sum_{k=0}^q \gamma_{k,p,q} t^k.
$$
We show that both of these polynomials satisfy 
recurrence relations that are similar to that of the $E_{p,q}(t)$'s
as in Theorem~\ref{T:lengthgf:intro}.

\section{Preliminaries}\label{S:Preliminaries}

For two subgroups $H$ and $K$ from a group $G$, there is a 
bijection between the set of $H$-orbits in $G/K$ and the set of $K$-orbits in $G/H$. 
This is easily seen by passing to the $(H,K)$-double cosets in $G$. 
Let $\theta_i$ ($i=1,2,3$) be one of the 
involutions mentioned in the introduction 
and let $K_i:=SL_{n}^{\theta_i}$ denote 
the corresponding symmetric subgroup in $SL_{n}$. 
As we mentioned it earlier, 
the $K_i$-orbits in $\ms{B}_n$ are parametrized 
by the corresponding sets of involutions in $S_n$. 
Equivalently, these sets of involutions parametrize 
the $B_n$-orbits in the quotients $SL_{n}/K_i$.
Note that the set of all 
Borel orbit closures in an affine variety 
form a graded poset with respect to inclusion.
We will refer to such a poset as a {\em Bruhat poset}. 
The Bruhat posets corresponding to $I_n$ and $FI_n$
are first considered by 
Richardson and Springer in their seminal paper~\cite{RS}. 
In the next few paragraphs we will 
briefly review some of the well-known results on these 
objects and set up our basic notation on involutions.

We write the elements of the symmetric group $S_n$ 
in cycle notation using parentheses, as well as in one-line notation using brackets. 
We omit brackets in one-line notation if there is no danger of confusion.   
For example, $w=4213=[4,2,1,3]=(1,4,3)$ is the permutation that maps 
$1$ to $4$, $2$ to $2$, $3$ to $1$, and $4$ to $3$.

An involution is an element of $S_n$ of order 
$\leq 2$ and the set of involutions in $S_n$ is denoted by $I_n$. 
The Bruhat order on $I_n$ has a minimal element 
$\alpha_n := \text{id}$, and a maximal element 
$w_0 = [n, n-1, \dots, 2, 1]$, which is 
the longest permutation in $S_n$.

Let $\pi \in I_n$ be an involution. 
The standard way of writing $\pi$ 
is as a product of 2-cycles. 
Since we often need the data of fixed points (1-cycles) of $\pi$, 
we are always going to include them in our notation. 
Thus, our {\em standard form} for $\pi$ is 
$$
\pi = (a_1, b_1)(a_2, b_2) \dots (a_k, b_k) c_1\dots c_{n-2k},
$$ 
where $a_i < b_i$ for all $1 \leq i \leq k$, 
$a_1 < a_2 < \dots < a_k$, and $c_1<\dots < c_{n-2k}$.

The Bruhat-Chevalley order on $S_n$ is a ranked poset 
and its grading is given by $\ell : S_n \rightarrow \Z$,
$\ell(\pi) = \text{the number of inversions in } \pi$. 
The Bruhat order on $I_n$ is also a ranked poset; 
for $\pi = (a_1, b_1) \dots (a_k, b_k)c_1\dots c_{n-2k} \in I_n$, 
the length $\mathbb{L}(\pi)$ is defined by
\begin{align}\label{A:it is observed}
\mathbb{L}(\pi) := \frac{\ell(\pi)+k}{2},
\end{align}
where $\ell(\pi)$ is the length of $\pi$ in $S_n$, and $k$ is the number 
$2$-cycles that appear in the standard form of $\pi$.

The papers~\cite{CCT,Incitti} give precise descriptions 
of the covering relations of Bruhat orders on the (fixed point free)
involutions in $S_n$. Although it is presented in a slightly different 
terminology, by using $(p,q)$-clans in~\cite{Wyser16}, Wyser 
described the Bruhat order on signed involutions. 

\vspace{.5cm}

There is another ``geometric'' partial ordering on involutions, 
namely the weak order, that is extremely useful for studying 
Bruhat ordering. 
In particular, the weak order is a ranked poset and its length 
function agrees with that of the Bruhat order. In general, 
a convenient way of defining the weak order is via the so called 
"Richardson-Springer monoid" action. Since our goal in this 
paper is {\em not} studying the poset structure, and since the 
descriptions of both Bruhat and weak orders are lengthy  
we skip their definitions but mention a relevant fact. 
(See Figure~\ref{F:Weak22}, 
where we illustrate the weak order on $I_{2,2}^\pm$.)

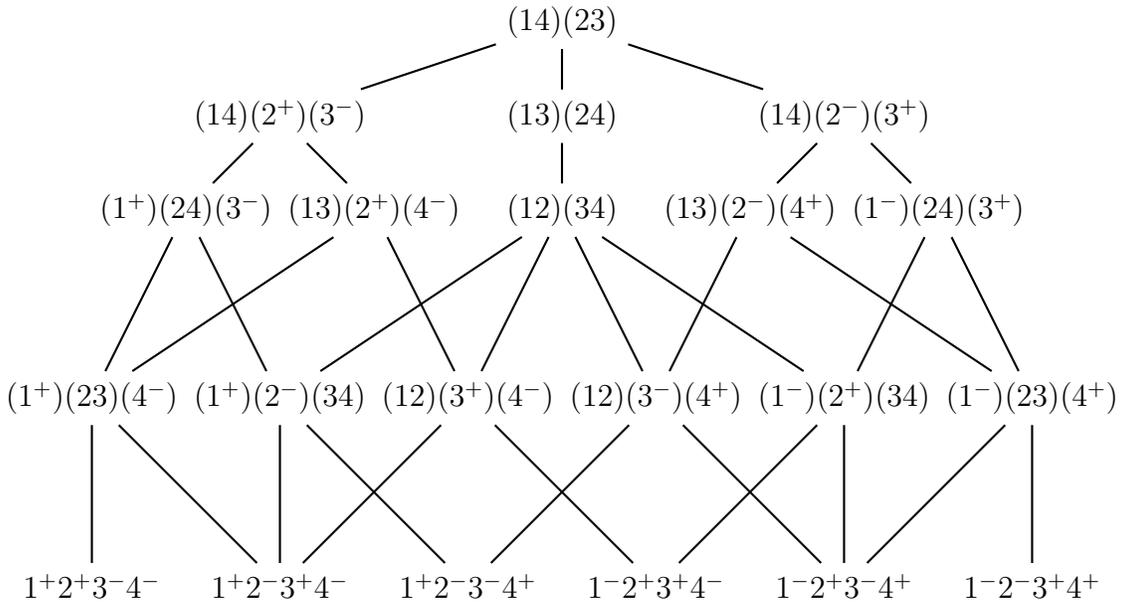
\begin{figure}[htp]
\begin{center}

\begin{tikzpicture}[scale=.25]

\node at (-25,0) (a1) {$1^+ 2^+ 3^- 4^-$};
\node at (-15,0) (a2) {$1^+ 2^- 3^+ 4^-$};
\node at (-5,0) (a3) {$1^+ 2^- 3^- 4^+$};
\node at (5,0) (a4) {$1^- 2^+ 3^+ 4^-$};
\node at (15,0) (a5) {$1^- 2^+ 3^- 4^+$};
\node at (25,0) (a6) {$1^- 2^- 3^+ 4^+$};

\node at (-25,10) (b1) {$(1^{+})(23)(4^{-})$};
\node at (-15,10) (b2) {$(1^{+})(2^{-})(34)$};
\node at (-5,10) (b3) {$(12)(3^+)(4^-)$};
\node at (5,10) (b4) {$(12)(3^-)(4^+)$};
\node at (15,10) (b5) {$(1^-)(2^+)(34)$};
\node at (25,10) (b6) {$(1^{-})(23)(4^{+})$};

\node at (-20,20) (c1) {$(1^{+})(24)(3^{-})$};
\node at (-10,20) (c2) {$(13)(2^{+})(4^{-})$};
\node at (0,20) (c3) {$(12)(34)$};
\node at (10,20) (c4) {$(13)(2^{-})(4^{+})$};
\node at (20,20) (c5) {$(1^{-})(24)(3^{+})$};

\node at (-15,25) (d1) {$(14)(2^{+})(3^{-})$};
\node at  (0,25) (d2) {$(13)(24)$};
\node at  (15,25) (d3) {$(14)(2^{-})(3^{+})$};

\node at (0,30) (e) {$(14) (23)$};

\draw[-,  thick] (a1) to (b1);
\draw[-,  thick] (a2) to (b1);
\draw[-,  thick] (a2) to (b2);
\draw[-,  thick] (a2) to (b3);
\draw[-,  thick] (a3) to (b2);
\draw[-,  thick] (a3) to (b4);
\draw[-,  thick] (a4) to (b3);
\draw[-,  thick] (a4) to (b5);
\draw[-,  thick] (a5) to (b4);
\draw[-,  thick] (a5) to (b5);
\draw[-,  thick] (a5) to (b6);
\draw[-,  thick] (a6) to (b6);

\draw[-, thick] (b1) to (c1);
\draw[-,  thick] (b1) to (c2);
\draw[-, thick] (b2) to (c1);
\draw[-, thick] (b2) to (c3);
\draw[-,  thick] (b3) to (c2);
\draw[-, thick] (b3) to (c3);
\draw[-, thick] (b4) to (c3);
\draw[-, thick] (b4) to (c4);
\draw[-, thick] (b5) to (c3);
\draw[-, thick] (b5) to (c5);
\draw[-, thick] (b6) to (c4);
\draw[-, thick] (b6) to (c5);

\draw[-, thick] (c1) to (d1);
\draw[-, thick] (c2) to (d1);
\draw[-, thick] (c3) to (d2);
\draw[-, thick] (c4) to (d3);
\draw[-, thick] (c5) to (d3);

\draw[-, thick] (d1) to (e);
\draw[-, thick] (d2) to (e);
\draw[-, thick] (d3) to (e);
\end{tikzpicture}

\caption{Weak order on $I_{2,2}^{\pm}$}\label{F:weak order I_4}
\label{F:Weak22}
\end{center}
\end{figure}

We will occasionally mention the ``support of a signed involution'' 
to mean the underlying involution without reference to its signs.

\begin{Remark}\label{R:it is observed}
It is observed in~\cite{CMW} that the length function of the weak 
order on signed involutions $I_{p,q}^\pm$ agrees with that of the 
weak order on $I_n$. In other words, the length of $\pi\in I_{p,q}^\pm$ is equal to 
$\mathbb{L}(\pi)$, where $\pi$ is identified with its supporting involution. 
From now on, by abusing notation we are going to use $\mathbb{L}(\cdot)$ 
for denoting the length function of Bruhat as well as the weak order on 
$I_{p,q}^\pm$.
\end{Remark}

Our final remark on the length function $\mathbb{L}(\cdot)$ is that if 
$\pi \in I_{p,q}^\pm$ is the signed involution corresponding to the Borel orbit 
$\mc{O}$, then the dimension of $\mc{O}$ is equal to $\mathbb{L}(\pi) + c$, 
where $c$ is the dimension of (any) closed Borel orbit in 
$SL_n/S(GL_p \times GL_q)$. Thus, studying $\mathbb{L}(\pi)$
is equivalent to studying dimensions of $S(GL_p\times GL_q)$-orbits.

\begin{Remark}\label{R:GrassmanPaths}
Let $V$ denote the complex $n$-dimensional vector space 
and let $\ms{G}_{p,n}$ denote the 
Grassmann variety of $p$-dimensional subspaces in $V$.
Let $P_{p,n}$ denote the subgroup of $SL_n$  
consisting of matrices of the form 
$\begin{bmatrix} A & B \\ \mathbf{0} & C \end{bmatrix}$ where $A$ and $C$ are
respectively $p\times p$ and $(n-p) \times (n-p)$ block-matrices with $\det A \cdot \det C =1$. 
In this case, the quotient space $SL_n / P_{p,n}$ is canonically 
isomorphic to $\ms{G}_{p,n}$, hence it parametrizes the 
$p$-dimensional subspaces of $\C^n$. Note that $SL_n$ acts on $\ms{G}_{p,n}$
by the left multiplication. A Schubert variety in $\ms{G}_{p,n}$ is defined 
as the Zariski closure of a $B_n$-orbit in $SL_n / P_{p,n}$. 
The following facts are well-known (see
the first section of Brion's lecture notes~\cite{Brion-Lectures}):
\begin{enumerate}
\item The set of $B_n$-orbits in $\ms{G}_{p,n}$ 
(hence, the set of $P_{p,n}$-orbits in $\ms{B}_n$) 
are in bijection with the lattice paths in $\N^2$
that starts at the origin and ends at the point $(n-p,p)$, moving 
with unit steps $E:=(1,0)$ and $N:=(0,1)$. We will refer to these 
lattice paths by $(p,q)$ Grassmann paths. Here, $q=n-p$. 
See Figure~\ref{F:grassmannian} for an example of a $(3,7)$
Grassmann path. 
\item Let $I$ be a $(p,q)$ Grassmann path and let $C_I$ 
denote the corresponding $B_n$-orbit. 
Then $\dim C_I$ is the total number of squares that are weakly 
above $I$ and contained inside the $p$-by-$q$ rectangular 
region. For example, for the Grassmann path in Figure~\ref{F:grassmannian}
the dimension is equal to $8$.
\item 
Let $I$ and $J$ be two $(p,q)$ Grassmann paths and 
let $C_I$ and $C_J$ denote,
respectively, the $B_n$-orbits in $\ms{G}_{p,n}$. 
In this case, $I$ is weakly below $J$ if and only if 
$C_J\subseteq \overline{C_I}$. 
\item The $B_n$-orbits that are contained in 
$Y:=\overline{C_I}$ give a cellular decomposition for $Y$.
Therefore, the topology of $Y$ is completely determined by
the $(p,q)$ Grassmann paths that stay weakly above $I$.
\end{enumerate}

\begin{figure}[htp]
\begin{center}
\begin{tikzpicture}[scale=.7]
\begin{scope}
\draw[thick, dotted] (0,0) grid (7,3);
\node at (7.5,3.5) {$(7,3)$};
\node at (-.5,-.5) {$(0,0)$};
\draw[ultra thick,red] (0,0) to (0,1);
\draw[ultra thick,red] (0,1) to (2,1);
\draw[ultra thick,red] (2,1) to (2,2);
\draw[ultra thick,red] (2,2) to (6,2);
\draw[ultra thick,red] (6,2) to (6,3);
\draw[ultra thick,red] (6,3) to (7,3);
\end{scope}
\end{tikzpicture}
\end{center}
\caption{A lattice path corresponding to a Schubert variety in $\ms{G}_{3,10}$.}
\label{F:grassmannian}
\end{figure}
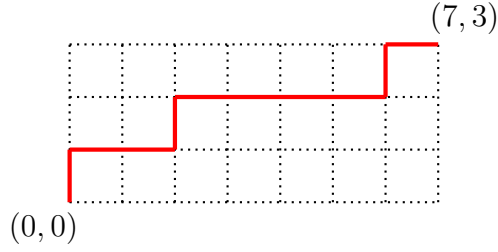

\end{Remark}

\section{Signed Involutions with $k$ 2-cycles}\label{S:Signed involutions}

Let $n$ and $k$ be two integers such that 
$0 \leq k \leq \lfloor \frac{n}{2} \rfloor$ and let 
$\pi = (i_1 j_1) \dots (i_k j_k) l_1 \dots l_{n-2k}$ 
be an involution with fixed points $l_1 < \dots < l_{n-2k}$.
Let us assume that $p$ and $q$ are two 
numbers such that $0\leq q \leq p$ and $p+q=n$.

We denote by $I^\pm_{k,p,q}$ the set of all signed 
involutions $\pi$ from $I_{p,q}^\pm$ 
such that $\pi$ has $n-2k$ fixed points, $p - q \leq n - 2k$, 
and there are $p- q$ more $+$'s than $-$'s. 
We denote the cardinality of $I^\pm_{k,p,q}$ by $\gamma_{k,p,q}$.

\begin{Remark}
Since $q+p-2k= n-2k \geq p-q$, it holds true that $0 \leq k \leq q$.
\end{Remark}

Now, if $\pi\in I^\pm_{k,p,q}$, then its support is one of the 
${ n \choose 2k } \frac{(2k)!}{2^k k!}$ involutions
$\pi \in I_n$ with $n-2k$ fixed points. 
This is easy to see but let us justify it for completeness:
Once the entries to appear in the transpositions of $\pi$ are chosen, 
the fixed points of $\pi$ are uniquely determined, so, 
the question is equivalent to choosing $k$ transposition
from $[n]$ and ordering them to give $\pi$. 
Now, it is not difficult to see that this number is given by 
${ n \choose 2k } \frac{(2k)!}{2^k k!}$.

Next, we look into ways to place $a$ +'s and $b$ -'s on the string 
$l_1 \dots l_{n-2k}$ so that there are exactly 
$p-q=a-b$ +'s more than -'s. 
Clearly, this number is equivalent to ${ n- 2k \choose a }$. 
Since $a+b = n-2k= q+p-2k$ and $a-b = p-q$, we have $a = p-k$. 
Therefore, we have 
\begin{align}\label{A:first formula}
\gamma_{k,p,q} = { q+p \choose 2k } \frac{ (2k)! }{ 2^k k! } { q+ p - 2k  \choose q-k }.
\end{align}
We express (\ref{A:first formula}) more symmetrically as follows:
\begin{align}\label{A:gamma kqp}
\gamma_{k,p,q} = \frac{(q+p)! }{ (q-k)! (p-k)! } \frac{ 1 }{ 2^k k! }.
\end{align}

Observe that the formula (\ref{A:gamma kqp}) 
is defined independently of the inequality $q<p$. 
From now on, for our combinatorial purposes, 
we skip mentioning this comparison between $p$ and $q$ and 
use the equality $\gamma_{k,p,q}=\gamma_{k,q,p}$ 
whenever it is needed. 
Also, we record the following obvious recurrences for 
a future reference:
\begin{align}\label{A:Future}
\gamma_{k,p,q} = \frac{(p-k+1)(q-k+1)}{2k} \gamma_{k-1,p,q}, \;\;
\gamma_{k,p,q} = \frac{p+q}{p-k} \gamma_{k,p-1,q}, \;\;
\gamma_{k,p,q} = \frac{p+q}{q-k} \gamma_{k,p,q-1}.
\end{align}

\vspace{.5cm}

Let $c_n$ denote the number of involutions in $S_n$. 
In other words, $c_n= | I_n |$. The exponential generating 
function of $c_n$'s is given by $e^{t + \frac{t^2}{2}}$. 
Let us define the polynomials $K_n(x)$ by the equality 
$$
\sum_{n\geq 0} K_n(x) \frac{t^n}{n!}  = e^{xt+\frac{t^2}{2}}.
$$
It is well known that $K_n(x) = \sum_{\pi \in I_n} x^{a_1(\pi)}$, 
where $a_1(\pi)$ denotes the number of 1-cycles (fixed points)
of $\pi$. See~\cite[Exercise 5.19]{Stanley}.
It easily follows that if $c_{n,r}$ denotes the number of elements of 
$I_n$ with exactly $r$ 1-cycles, then 
\begin{align}\label{A:2-cycles}
\sum_{n,r\geq 0} c_{n,r} \frac{t^n}{n!} x^r = e^{x t+\frac{t^2}{2}}.
\end{align}

\begin{Remark}\label{R:note}
The numbers $c_{n,r}$ appear in our context rather naturally. 
Suppose we have $q=k \leq p$. Then $\gamma_{k,p,k}=\gamma_{k,k,p}$ is the 
number of signed involutions on $[p+k]$ such that there are $p-k$ +'s more than -'s 
on the fixed points. It is not difficult to see in this case 
that the number of $-$'s is 0. 
Therefore, $\gamma_{k,p,k}$ is the number of involutions on $[p+k]$ with $p-k$ 1-cycles and whose fixed 
points have + signs only. In other words, $\gamma_{k,p,k}= c_{p+k,p-k}$. 
The polynomial $K_n(x)$ is the sum of the entries of the $n$-th row of Table~\ref{T:H}.
\begin{table}[htp]
$$
\begin{matrix}
c_{0,0}x^0 & 0 & 0 & 0 & 0 &\cdots \\
c_{1,0}x^0 & c_{1,1}x^1 & 0& 0 & 0 & \cdots \\
c_{2,0}x^0 & c_{2,1}x^1 & c_{2,2} x^2 & 0 & 0 & \cdots \\
c_{3,0}x^0 & c_{3,1}x^1 & c_{3,2} x^2 & c_{3,3} x^3  & 0 & \cdots \\
\end{matrix}
$$
\caption{Dissection of $K_n(x)$'s.}
\label{T:H}
\end{table}

\end{Remark}

\section{Recurrences}\label{S:Recurrence}

In this section, we will show that $\gamma_{k,p,q}$'s obey a 3-term 
recurrence and we will exploit the consequences of this fact. 

\begin{Theorem}\label{T:rec}
Let $p$ and $q$ be two positive integers. 
Then the following recurrence relation and its initial condition hold true:
\begin{equation}\label{recurrence}
\gamma_{k,p,q} =  \gamma_{k,p-1,q} +  
\gamma_{k,p,q-1} + (q+p-1) \gamma_{k-1,p-1,q-1}\; \; \text{and} \;\; 
\gamma_{0,p,q}= {p+q \choose q}.
\end{equation}
Furthermore, the exact formula for $\gamma_{k,p,q}$ is given by 
\begin{align}\label{A:gamma kqp 1}
\gamma_{k,p,q} = \frac{(q+p)! }{ (q-k)! (p-k)! } \frac{ 1 }{ 2^k k! }.
\end{align}
\end{Theorem}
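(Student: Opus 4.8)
The plan is to establish the three-term recurrence combinatorially, by tracking the fate of the largest letter $n=p+q$ in a signed $(p,q)$-involution, and then to reconcile this with the closed formula by checking that the right-hand side of \eqref{A:gamma kqp} satisfies the same recurrence and initial condition. Recall from the discussion preceding \eqref{A:gamma kqp} that an element of $I_{k,p,q}^{\pm}$ is an involution on $[n]$ with exactly $k$ two-cycles whose $n-2k$ fixed points carry $p-k$ plus signs and $q-k$ minus signs. Since the formula \eqref{A:gamma kqp} is already in hand from Section~\ref{S:Signed involutions}, the genuinely new content of the theorem is the recurrence itself, and verifying that the formula obeys it provides both the link between the two statements and an independent confirmation of the count.

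First I would partition $I_{k,p,q}^{\pm}$ according to the role played by $n$ in the signed involution $\pi$. There are exactly three mutually exclusive possibilities: $n$ is a fixed point bearing a $+$ sign, $n$ is a fixed point bearing a $-$ sign, or $n$ belongs to a two-cycle $(i\,n)$ for some $i\in\{1,\dots,n-1\}$. In the first case, deleting $n$ leaves a signed involution on $[n-1]$ with $k$ two-cycles, now carrying $(p-k)-1=(p-1)-k$ plus signs and $q-k$ minus signs, i.e.\ an element of $I_{k,p-1,q}^{\pm}$; this contributes $\gamma_{k,p-1,q}$. Symmetrically, the second case contributes $\gamma_{k,p,q-1}$. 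In the third case, the partner $i$ may be any of the $n-1=p+q-1$ smaller letters, and since no sign is attached to the members of a two-cycle all $p+q-1$ choices are admissible; deleting the pair $(i\,n)$ and relabeling the remaining $n-2$ letters order-preservingly yields a signed involution on $[n-2]$ with $k-1$ two-cycles and the same sign multiset, that is, an element of $I_{k-1,p-1,q-1}^{\pm}$. This case therefore contributes $(p+q-1)\,\gamma_{k-1,p-1,q-1}$, and summing the three cases gives the asserted recurrence.

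For the initial condition, when $k=0$ there are no two-cycles, so every one of the $n=p+q$ letters is a signed fixed point, with $p$ of them positive and $q$ negative; choosing which $q$ positions are negative gives $\gamma_{0,p,q}=\binom{p+q}{q}$. To reconcile this with the explicit formula, I would check that $\tfrac{(p+q)!}{(p-k)!(q-k)!}\tfrac{1}{2^k k!}$ obeys the recurrence: after substituting, each of the three terms shares the common factor $\tfrac{(p+q-1)!}{(p-k)!(q-k)!}\tfrac{1}{2^k k!}$, with respective cofactors $p-k$, $q-k$, and $2k$; the last arises because $(p+q-1)(p+q-2)!=(p+q-1)!$ together with $\tfrac{1}{2^{k-1}(k-1)!}=\tfrac{2k}{2^k k!}$. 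These cofactors sum to $p+q$, which reconstitutes $\tfrac{(p+q)!}{(p-k)!(q-k)!}\tfrac{1}{2^k k!}$ and matches \eqref{A:gamma kqp}, so an induction on $p+q$ closes the argument.

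I do not anticipate a genuine obstacle here: the argument is a clean ``remove the largest letter'' decomposition. The only points demanding care are bookkeeping ones, namely confirming that the sign balance $(p-k,\,q-k)$ transforms correctly under each deletion (so that the reduced objects land in the intended classes $I_{k,p-1,q}^{\pm}$, $I_{k,p,q-1}^{\pm}$, and $I_{k-1,p-1,q-1}^{\pm}$), and confirming that in the two-cycle case every one of the $p+q-1$ partners is permissible. It is precisely this last observation that produces the variable coefficient $q+p-1$ rather than a constant, which is the feature that later complicates the generating-series analysis.
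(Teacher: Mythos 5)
Your proof is correct and follows essentially the same route as the paper: the recurrence is obtained by classifying the fate of the largest letter $n=p+q$ (fixed point with $+$, fixed point with $-$, or partner in a $2$-cycle $(i\,n)$ with $p+q-1$ admissible choices of $i$), with deletion/insertion giving the bijection onto $I_{k,p-1,q}^{\pm}\sqcup I_{k,p,q-1}^{\pm}\sqcup\bigsqcup_{i=1}^{n-1}I_{k-1,p-1,q-1}^{\pm}$, while the closed formula is taken from the direct count in Section~\ref{S:Signed involutions}. Your additional algebraic check that the formula satisfies the recurrence (cofactors $p-k$, $q-k$, $2k$ summing to $p+q$) is a harmless confirmation the paper does not include, but it does not change the substance of the argument.
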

\begin{proof}

We already proved eqn. (\ref{A:gamma kqp 1}) in the previous section.
The proof of the initial condition is easy, so, we omit it.
Now, we are going to construct our proof of the recurrence 
by analyzing what happens to an involution $\pi \in I_{k,p,q}^\pm$ 
when we remove its largest entry $n$. Clearly, $n$ appears in $\pi$ 
either as a fixed point, or in one of the 2-cycles. 
Thus, we partition $I_{k,p,q}^\pm$ into $n+1$ disjoint subsets; 
$$
I_{k,p,q}^\pm = I_{k,p,q}^\pm (+) \cup I_{k,p,q}^\pm (-) \cup \bigcup_{i=1}^{n-1} I_{k,p,q}^\pm (i),
$$ 
where
\begin{enumerate}
\item $I_{k,p,q}^\pm (+):=\{ \pi \in I_{k,p,q}^\pm :\ \text{ n is a fixed point with a + sign}\}$,
\item $I_{k,p,q}^\pm (-):=\{ \pi \in I_{k,p,q}^\pm :\ \text{ n is a fixed point with a - sign}\}$,
\item $I_{k,p,q}^\pm (i):=\{ \pi \in I_{k,p,q}^\pm :\ \text{ n appears in the 2-cycle $(i,n)$}\}$ for $i=1,\dots, n-1$.
\end{enumerate}

First, we assume that $\pi \in I_{k,p,q}^\pm (+)$, so 
$$
\pi = (i_1\; j_1) \dots (i_k \; j_k) l_1 \dots l_{n-2k-1} n^+.
$$ 
It follows that by removing $n$ we reduce the total number of 
$+$ signs by $1$. Note that this makes sense because 
$p-q$ is fixed. Thus, the number of such signed $(p, q)$-involutions 
on $[n]$ is counted by $\gamma_{k,p-1,q}$.  
By using a similar argument for the case $\pi \in I_{k,p,q}^\pm (-)$, 
we conclude that there are $\gamma_{k,p,q-1}$ such signed involutions on $[n]$.

Next, and finally, we consider the case where 
$n$ appears in a 2-cycle $(i, j)$ of $\pi$. Then $j=n$ and therefore   
$\pi \in I_{k,p,q}^\pm (i)$. It is obvious that there are $(n-1)$ 
possibilities for $i$ from $n$ numbers. Removing this 2-cycle 
from $\pi$ leaves us with $(n-2)$ elements and $(k-1)$ 2-cycles 
but it does not change the signs on the fixed points. 
Of course, once this 2-cycle is removed, 
we decrease the numbers that are greater than $i_r$ by 1 
so that we have valid signed involution whose support lies in $I_{n-2}$. 
In particular, since the difference of $+$ and $-$ 
signs is preserved, we see that the number of such signed 
$(p, q)$-involutions on $[n]$ is given by 
$(n-1)\gamma_{k-1,p-1,q-1}$. 

Notice that in each of these cases we get an injective 
map into a set of signed involutions of a smaller size. 
Indeed, by removing $n$, in the cases 1. and 2. 
we get injections into $I_{k,p-1,q}^\pm$ and $I_{k,p,q-1}^\pm$, respectively.
In the case of 3. we get an injection into $I_{k-1,p-1,q-1}^\pm$. 
Conversely, if $\pi'$ is a signed involution from $I_{k,p-1,q}^\pm$ 
(or, from $I_{k,p,q-1}$), then we append $n^+$
(resp. $n^-$) to get an element $\psi_k(+)(\pi') \in I_{k,p,q}^\pm$ 
(resp. $\psi_k(-)(\pi') \in I_{k,p,q}^\pm$). If 
$\pi' \in I_{k-1,p-1,q-1}^\pm$, then we pick a number, 
say $i \in [n-1]$ in $n-1$ different ways; we add 1 to every number 
$j$ such that $i < j$ and $j$ appears in the standard form of $\pi'$; 
and finally we insert the 2-cycle $(i,n)$ into $\pi'$. Let us denote 
the resulting map by $\psi_k(i)(\pi') \in I_{k,p,q}^\pm$
Obviously, the maps $\psi(\pm)$ and $\psi(i)$ 
are well defined inverses to the procedures that are described in the 
previous paragraph. 
Thus, it is now clear that we have built a bijection between $I_{k,p,q}^\pm$ and 
the disjoin union 
$I_{k,p-1,q}^\pm \cup I_{k,p,q-1}^\pm \cup \bigsqcup_{i=1}^{n-1} I_{k-1,p-1,q-1}^\pm$, 
proving our claim. ({\em Note:} Here, we use square-union to indicate 
that it is a disjoint union of $n-1$ copies of the same set.)

\end{proof}

We list two important corollaries.

\begin{Corollary}\label{P:note}
The number of involutions $\pi \in I_n$ with exactly $r$ 1-cycles, 
$c_{n,r}$, is a special case of $\gamma_{k,p,q}$'s.
\end{Corollary}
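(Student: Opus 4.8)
The plan is to show that the family $\gamma_{k,p,q}$ specializes, under a suitable choice of parameters, to the numbers $c_{n,r}$. The key is Remark~\ref{R:note}, which already records the essential observation: when $q=k$, a signed involution counted by $\gamma_{k,p,k}$ is forced to have no $-$ signs among its fixed points, because all $q=k$ of the ``negative'' resources are consumed by the $k$ transpositions. I would begin by making this forcing precise. If $\pi \in I^\pm_{k,p,k}$, then $\pi$ has $k$ transpositions and $(p+k)-2k = p-k$ fixed points, with $p-q = p-k$ more $+$'s than $-$'s. Since the number of fixed points equals $p-k$, which is already the required excess of $+$'s over $-$'s, there is no room for any $-$ sign; every fixed point must carry a $+$.

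Next I would translate this into the statement that the signs are entirely determined, so that $\gamma_{k,p,k}$ counts plain involutions on $[p+k]$ having exactly $p-k$ fixed points (equivalently $k$ two-cycles), with a unique admissible sign assignment on each. By the definition of $c_{n,r}$ as the number of involutions in $S_n$ with exactly $r$ one-cycles, setting $n = p+k$ and $r = p-k$ gives
\begin{align*}
\gamma_{k,p,k} = c_{p+k,\,p-k}.
\end{align*}
I would then verify this numerically against the closed formula to be safe: substituting $q=k$ into \eqref{A:gamma kqp 1} yields $\gamma_{k,p,k} = \frac{(p+k)!}{0!\,(p-k)!}\,\frac{1}{2^k k!} = \frac{(p+k)!}{(p-k)!\,2^k k!}$, and this is exactly the standard count of involutions on $p+k$ letters with $k$ transpositions (choose the $2k$ moved points and pair them in $\frac{(2k)!}{2^k k!}$ ways), which is $c_{p+k,p-k}$. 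Conversely, given $n$ and $r$ with the same parity, one recovers $k = \tfrac{n-r}{2}$ and $p = \tfrac{n+r}{2}$, so every $c_{n,r}$ arises as such a $\gamma_{k,p,k}$.

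The argument is short and the only point requiring care is the combinatorial forcing: I must argue cleanly that the inequality $p-q \le n-2k$ in the definition of $I^\pm_{k,p,q}$ becomes an equality when $q=k$, leaving exactly zero slack and hence zero $-$ signs. This is where I expect the sole (minor) obstacle to lie, since it is tempting to conflate ``there are $p-q$ more $+$'s than $-$'s'' with ``there are $p-q$ $+$'s.'' Once it is noted that the total number of signed fixed points is $p-k = p-q$ and the net excess is also $p-q$, the equation $(\#+) - (\#-) = (\#+) + (\#-)$ forces $\#- = 0$, after which the bijection with sign-free involutions is immediate and the identification with $c_{n,r}$ follows.
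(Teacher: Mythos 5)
Your proof is correct and follows essentially the same route as the paper: both rely on the identity $\gamma_{k,p,k}=c_{p+k,p-k}$ from Remark~\ref{R:note} together with inverting the equations $n=p+k$, $r=p-k$. You simply spell out in more detail the sign-forcing argument (that $(\#+)-(\#-)=(\#+)+(\#-)$ forces $\#-=0$) which the paper's remark dismisses as ``not difficult to see,'' and you correctly note the parity condition $n\equiv r \pmod 2$ that the paper leaves implicit.
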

\begin{proof}
We already know from Remark~\ref{R:note} that $\gamma_{k,p,k}= c_{p+k,p-k}$. 
The result follows from the fact that the equations 
$n= p+k$, $r=p-k$ have a unique solution.
\end{proof}

\begin{Corollary}\label{C:First recurrence}
If we put $\alpha_{p,0}=\alpha_{0,q}=1$ for all nonnegative integers $p$ and $q$, 
then the numbers $\alpha_{p,q}$ satisfy the following recurrence relation
\begin{equation}\label{A:recurrence}
\alpha_{p,q} =  \alpha_{p-1,q} + \alpha_{p,q-1} +  (p+q-1) \alpha_{p-1,q-1},\qquad p,q\geq 1.
\end{equation}
\end{Corollary}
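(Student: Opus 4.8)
The plan is to derive the recurrence for $\alpha_{p,q}$ by summing the already-established three-term recurrence for $\gamma_{k,p,q}$ (Theorem~\ref{T:rec}) over the number $k$ of $2$-cycles. The starting observation is that, since a signed $(p,q)$-involution has between $0$ and $\min(p,q)=q$ many $2$-cycles, we have
$$
\alpha_{p,q}=\sum_{k\ge 0}\gamma_{k,p,q},
$$
where I adopt the convention that $\gamma_{k,p,q}=0$ whenever $k<0$ or $k>\min(p,q)$. With this convention the boundary values are immediate: when $q=0$ only the term $k=0$ survives, giving $\alpha_{p,0}=\gamma_{0,p,0}=\binom{p}{0}=1$, and symmetrically $\alpha_{0,q}=\gamma_{0,0,q}=1$.

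Next I would split off the $k=0$ term and apply the recurrence of Theorem~\ref{T:rec} to each $\gamma_{k,p,q}$ with $k\ge 1$. Summing yields
$$
\alpha_{p,q}=\gamma_{0,p,q}+\sum_{k\ge 1}\gamma_{k,p-1,q}+\sum_{k\ge 1}\gamma_{k,p,q-1}+(p+q-1)\sum_{k\ge 1}\gamma_{k-1,p-1,q-1}.
$$
The last sum reindexes (set $j=k-1$) to $\sum_{j\ge 0}\gamma_{j,p-1,q-1}=\alpha_{p-1,q-1}$, while the first two sums complete to full sums at the cost of their $k=0$ terms: $\sum_{k\ge 1}\gamma_{k,p-1,q}=\alpha_{p-1,q}-\gamma_{0,p-1,q}$, and likewise for the other. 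Here one must check that the out-of-range summands in these reindexings really do vanish under the stated convention; this is a short check on the top index $k$, the only delicate case being $p=q$, where $\gamma_{q,p-1,q}$ and $\gamma_{q,p,q-1}$ are both zero.

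Collecting these identities, I would be left with
$$
\alpha_{p,q}=\alpha_{p-1,q}+\alpha_{p,q-1}+(p+q-1)\alpha_{p-1,q-1}+\big(\gamma_{0,p,q}-\gamma_{0,p-1,q}-\gamma_{0,p,q-1}\big),
$$
so the proof reduces to showing that the parenthesized correction vanishes. Using the initial condition $\gamma_{0,p,q}=\binom{p+q}{q}$ from Theorem~\ref{T:rec}, this correction equals $\binom{p+q}{q}-\binom{p+q-1}{q}-\binom{p+q-1}{q-1}$, which is $0$ by Pascal's rule. This Pascal cancellation is the only substantive step; everything else is reindexing, so I expect the main (if modest) obstacle to be tracking the index ranges carefully enough that the $k=0$ boundary terms telescope correctly, rather than any genuine difficulty.
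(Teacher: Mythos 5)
Your proof is correct and follows essentially the same route as the paper: both sum the three-term recurrence for $\gamma_{k,p,q}$ from Theorem~\ref{T:rec} over $k$, using $\alpha_{p,q}=\sum_k \gamma_{k,p,q}$, and show the leftover boundary terms cancel. The only cosmetic difference is that the paper sums over $1\le k\le q-1$ and disposes of the $k=q$ terms by applying the recurrence once more, whereas you sum over all $k\ge 1$ under the vanishing convention and reduce the entire correction to Pascal's rule $\binom{p+q}{q}=\binom{p+q-1}{q}+\binom{p+q-1}{q-1}$ --- the same cancellation, packaged slightly more cleanly.
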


\begin{proof}
Taking the sum of both sides of the equation (\ref{recurrence}) over $k$, 
where $1 \leq k \leq q-1$, gives us
\begin{align*}
\alpha_{p,q} -  \alpha_{p-1,q} -  \alpha_{p,q-1} - (p+q-1) \alpha_{p-1,q-1} 
&= \gamma_{0,p,q} -  \gamma_{0,p-1,q} -  \gamma_{0,p,q-1} \\
&+ \gamma_{q,p,q} -  \gamma_{q,p,q-1} - (p+q-1) \gamma_{q-1,p-1,q-1} \\
&= 0.
\end{align*}
By using the recurrence relation in (\ref{recurrence}) once more, 
we see that the latter becomes $0$.   
\end{proof}

\section{Generating functions}\label{S:Generating}

One of the many options for a bivariate generating function 
for $\alpha_{p,q}$'s is the following power series:
\begin{align}\label{A:one of many}
v(x,y) := \sum_{p,q \geq 0 } \alpha_{p,q} x^q \frac{y^p}{p!}.
\end{align}
We will find a closed formula for (\ref{A:one of many}) 
by using the recurrence relation (\ref{A:recurrence}).
Multiplying both sides of the recurrence relation by $\frac{x^q y^p}{p!}$ 
and taking the sum over all $p,q \geq 1$ give us 
\begin{equation}\label{E:first}
\sum_{p,q \geq 1} \frac{\alpha_{p,q}}{p!} x^q y^p = \sum_{p,q \geq 1} \frac{\alpha_{p-1,q}}{p!} x^q y^p+ \sum_{p,q \geq 1} \frac{\alpha_{p,q-1}}{p!} x^q y^q+   \sum_{p,q \geq 1} (p+q-1) \frac{\alpha_{p-1,q-1}}{p!} x^q y^p.
\end{equation}
Since
\begin{align*}
v(x, y) = \sum_{p,q \geq 0} \frac{\alpha_{p,q}}{p!} x^q y^p 
&= \alpha_{0,0} + \alpha_{0,1} x + \dots + \alpha_{0,q} x^q +\dots \\
&+ \frac{\alpha_{1,0}}{1!} y + \dots + \frac{\alpha_{p,0}}{p!} y^p + \dots\\
&+ \frac{\alpha_{1,1}}{1!} xy + \dots + \frac{\alpha_{p,1}}{p!} xy^p +\dots\\
&+ \frac{\alpha_{1,2}}{1!} x^2 y + \dots + \frac{\alpha_{p,2}}{p!} x^2 y^p +\dots
\end{align*}
the equation (\ref{E:first}) combined with the initial conditions $\alpha_{p,0}=\alpha_{0,q}=1$ gives 
\begin{align*}
v(x,y)- \frac{1}{1-x} - e^y +1 &= \int \sum_{p\geq 1,q \geq 0} \frac{\alpha_{p-1,q}}{(p-1)!} x^{q} y^{p-1}  dy - e^y 
+ x \bigg(\sum_{p,q \geq 0} \frac{\alpha_{p,q}}{p!} x^q y^p - \frac{1}{1-x} \bigg) \\
&+ \sum_{p,q \geq 1} p\frac{\alpha_{p-1,q-1}}{p!} x^q y^p 
+ \sum_{p,q \geq 1} q\frac{\alpha_{p-1,q-1}}{p!} x^q y^p -   \sum_{p,q \geq 1} \frac{\alpha_{p-1,q-1}}{p!} x^q y^p \\
&= \int \sum_{p\geq 1,q \geq 0} \frac{\alpha_{p-1,q}}{(p-1)!} x^{q} y^{p-1}  dy - e^y 
+ x \bigg(\sum_{p,q \geq 0} \frac{\alpha_{p,q}}{p!} x^q y^p - \frac{1}{1-x} \bigg) \\
&+ xy \sum_{p,q \geq 0} \frac{\alpha_{p-1,q-1}}{(p-1)!} x^{q-1} y^{q-1} 
+\int  \sum_{p,q \geq 1} \frac{q\alpha_{p-1,q-1}}{(p-1)!} x^{q-1} y^{p-1} dy\\
 &-  x \int \sum_{p,q \geq 1} \frac{\alpha_{p-1,q-1}}{(p-1)!} x^{q-1} y^{p} dy.
\end{align*}
Thus, we have 
\begin{align*}
v(x, y)  - \frac{1}{1-x} - e^y +1 &= \int v(x,y) dy - e^y + xv(x,y) - \frac{x}{1-x} + xyv(x,y)\\
 &+ x \int \bigg( \frac{\partial}{\partial x} (x v(x,y))\bigg) dy - x \int v(x,y) dy,
\end{align*}
or equivalently,
\begin{align*}
(1-x- xy) v(x, y) &= (1-x)\int v(x,y) dy  + x \int \bigg( \frac{\partial}{\partial x} (x v(x,y))\bigg) dy .
\end{align*} 

Taking the integral of both sides with respect to $y$ yields the following PDE
\begin{align*}
-xv(x,y) +(1-x-xy) \frac{\partial v(x,y)}{\partial y} 
= (1-x) v(x,y) + x \bigg( v(x,y) + x \frac{\partial v(x,y)}{\partial x} \bigg),
\end{align*}
or, equivalently, 
\begin{align}\label{A:PDE}
(-x^2) \frac{\partial v(x,y)}{\partial x} + (1-x-xy)\frac{\partial v(x,y)}{\partial y}=  (1+x)v(x,y),
\end{align}
with the initial conditions $v(0,y) = e^y$ and $v(x,0) = \frac{1}{1-x}$.

Solutions of such PDE's are easily obtained by applying the method of ``characteristic curves.''
Our characteristic curves are $x(r,s), y(r,s)$, and $v(r,s)$. Their tangents are equal to 
\begin{equation}\label{charcurve}
\frac{\partial x }{\partial s } = -x^2 \qquad \frac{\partial y }{\partial s } 
= 1- x- xy, \qquad \frac{\partial v }{\partial s } = (1+x)v,
\end{equation}
with the initial conditions $$x(r,0) = r,\;\; y(r,0) =0,\;\; \text{and}\;\; v(r,0) = \frac{1}{1-r}.$$

From the first equation given in (\ref{charcurve}) 
and its initial condition below it,  we compute that  
\begin{align}\label{A:y is simple}
x(r,s)= \frac{r}{rs+1}.
\end{align}
Plugging this into the second equation gives us 
$\frac{\partial y }{\partial s } =  1- \frac{r}{rs+1}(1+y)$, 
which is a first order linear ODE. Its solution is given by 
\begin{align}\label{A:x is complicated}
y(r,s)=\frac{rs^2 - 2rs + 2s }{2(rs+1)}.
\end{align}
Finally, from the last equation in (\ref{charcurve}) 
together with its initial condition we conclude that 
$$
v(r,s) =\frac{ e^{s} (rs+1)}{1-r}.
$$
In summary we outlined the proof of our next result.
\begin{Theorem}\label{T:GF2}
Let $v(x,y)$ denote the function that is represented by the series 
$\sum_{p,q\geq 0} \alpha_{p,q} x^q \frac{y^p}{p!}$
around the origin. If $r$ and $s$ are the variables related to $x$ and $y$ as in equations 
(\ref{A:x is complicated}) and (\ref{A:y is simple}), then around $(r,s)=(0,0)$ we have 
\begin{align}\label{A:simple}
v(r,s) =\frac{ e^{s} (rs+1)}{1-r}.
\end{align}
\end{Theorem}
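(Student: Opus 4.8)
The plan is to transcribe the recurrence~(\ref{A:recurrence}) into a first-order linear partial differential equation for $v(x,y)$ and then to integrate that equation by the method of characteristic curves. The one genuinely delicate feature is that $v$ is assembled from a \emph{mixed} generating function: it is ordinary in $x$ (which records $q$) but exponential in $y$ (which records $p$). Because of this asymmetry, a shift or a multiplication in one index becomes a different analytic operation than in the other, and the variable coefficient $(p+q-1)$ must be dismantled with care. I expect this transcription step to be the main obstacle; once the PDE is available the rest is routine.

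Concretely, I would multiply~(\ref{A:recurrence}) by $x^{q}y^{p}/p!$ and sum over $p,q\geq 1$. The shift $p\mapsto p-1$ played against the exponential normalization turns $\alpha_{p-1,q}$ into an antiderivative in $y$, whereas the shift $q\mapsto q-1$ against the ordinary normalization merely multiplies by $x$. For the coefficient I would split $p+q-1$ into its three summands $p$, $q$, and $-1$, and treat them separately: the factor $p$ cancels a factorial and, after the shifts, yields a clean term $xy\,v$; the factor $q$ acts as the Euler operator $x\,\partial_x$, appearing under an antiderivative in $y$; and the constant $-1$ contributes another antiderivative. The two boundary rows $\alpha_{p,0}=1$ and $\alpha_{0,q}=1$ must then be reinstated, their generating functions being $\tfrac{1}{1-x}$ and $e^{y}$ respectively. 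After collecting terms and differentiating once in $y$ to clear the antiderivatives, I expect to arrive at the partial differential equation~(\ref{A:PDE}),
\begin{align*}
-x^{2}\,\frac{\partial v}{\partial x}+(1-x-xy)\,\frac{\partial v}{\partial y}=(1+x)\,v,
\end{align*}
subject to $v(0,y)=e^{y}$ and $v(x,0)=\tfrac{1}{1-x}$.

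With the PDE in hand the method of characteristics is routine. I would parametrize the initial curve $y=0$ by $r=x$, so that $s=0$ there and the transported data is $v(r,0)=\tfrac{1}{1-r}$, and then write down the characteristic system
\begin{align*}
\frac{\partial x}{\partial s}=-x^{2},\qquad \frac{\partial y}{\partial s}=1-x-xy,\qquad \frac{\partial v}{\partial s}=(1+x)\,v .
\end{align*}
The first equation is separable and integrates to $x(r,s)=\tfrac{r}{rs+1}$. Substituting this into the second produces a first-order linear ODE in $y$, whose solution with $y(r,0)=0$ is $y(r,s)=\tfrac{rs^{2}-2rs+2s}{2(rs+1)}$. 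Finally the third equation is separable, and since $\int(1+x)\,ds=s+\ln(rs+1)$, the initial value $\tfrac{1}{1-r}$ forces $v(r,s)=\tfrac{e^{s}(rs+1)}{1-r}$, which is the asserted formula.

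For full rigor one should also confirm that $(r,s)\mapsto(x,y)$ genuinely presents $v$ as a function of $(x,y)$ near the origin. This is immediate: from the formulas above, $x$ and $y$ agree with $r$ and $s$ to first order at the origin, so the Jacobian of the change of coordinates equals the identity at $(0,0)$ and the map is locally invertible, in accordance with the explicit inversion recorded in~(\ref{R:reversible:intro}).
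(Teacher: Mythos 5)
Your proposal is correct and takes essentially the same route as the paper: you transcribe the recurrence~(\ref{A:recurrence}) into the PDE~(\ref{A:PDE}) with the identical splitting of the coefficient $p+q-1$ into $p$, $q$, and $-1$ and the same boundary data $v(x,0)=\tfrac{1}{1-x}$, $v(0,y)=e^{y}$, and then integrate by the very same characteristic system, obtaining $x(r,s)=\tfrac{r}{rs+1}$, $y(r,s)=\tfrac{rs^{2}-2rs+2s}{2(rs+1)}$, and $v(r,s)=\tfrac{e^{s}(rs+1)}{1-r}$. Your closing check that the Jacobian of $(r,s)\mapsto(x,y)$ is the identity at the origin is a minor addition that the paper leaves implicit via the explicit inversion~(\ref{R:reversible:intro}).
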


\begin{Corollary}\label{C:genfun}
The bivariate generating series for $\alpha_{p,q}$'s is given by 
\begin{align}\label{A:substitute:intro}
\sum_{p,q\geq 0} \alpha_{p,q} x^q \frac{y^p}{p!}
&=\frac{e^{\frac{1-x-\sqrt{x^2-2xy-2x+1}}{x}} (x-\sqrt{x^2-2xy-2x+1})}{-x^2+2xy+2x-1+x\sqrt{x^2-2xy-2x+1}}.
\end{align}
\end{Corollary}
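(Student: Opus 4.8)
The plan is to invert the coordinate change of Theorem~\ref{T:GF2} and substitute into $v(r,s) = \frac{e^s(rs+1)}{1-r}$. The relation $x = \frac{r}{rs+1}$ is the easier one to dispose of: clearing denominators gives $x(rs+1)=r$, hence $r = \frac{x}{1-xs}$ and, more usefully, $rs+1 = \frac{1}{1-xs}$. I would use these two identities to eliminate $r$ entirely from the second relation, leaving a single equation relating $s$ to $x$ and $y$.

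Carrying this out, I substitute $rs = \frac{xs}{1-xs}$ and $rs+1 = \frac{1}{1-xs}$ into $y = \frac{rs^2 - 2rs + 2s}{2(rs+1)}$. After clearing the common factor $1-xs$, the numerator and denominator simplify so that the expression collapses to $2y = 2s - 2xs - xs^2$, that is, the quadratic
$$
x s^2 + 2(x-1)s + 2y = 0.
$$
Solving gives $s = \frac{1 - x \pm \sqrt{x^2 - 2xy - 2x + 1}}{x}$. I expect the main (and really the only) subtle point to be the choice of branch. Setting $y=0$ reduces the radicand to $(1-x)^2$, so the two branches become $s = 0$ and $s = \frac{2(1-x)}{x}$; since the initial condition $y(r,0)=0$ forces $s \to 0$ as $y\to 0$, only the minus branch is admissible near the origin. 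Note that this $s$ is precisely the exponent appearing in the claimed formula, so it already supplies the factor $e^s$.

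The remainder is back-substitution. From the minus branch, $xs = 1 - x - \sqrt{x^2-2xy-2x+1}$, so that $1 - xs = x + \sqrt{x^2-2xy-2x+1}$. Writing $R := \sqrt{x^2-2xy-2x+1}$, this yields $rs+1 = \frac{1}{x+R}$, $r = \frac{x}{x+R}$, and $1-r = \frac{R}{x+R}$; hence $\frac{rs+1}{1-r} = \frac{1}{R}$ and $v = \frac{e^s}{R}$. To match the (deliberately unsimplified) expression in the statement, I observe that its denominator factors as $-x^2+2xy+2x-1+xR = -R^2 + xR = R(x-R)$, whence $\frac{x-R}{R(x-R)} = \frac{1}{R}$, confirming the equality. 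The only remaining bookkeeping, namely that these algebraic manipulations are legitimate as identities of power series convergent in a neighborhood of the origin, is guaranteed by the invertibility of the coordinate change asserted in Theorem~\ref{T:gs:intro}.
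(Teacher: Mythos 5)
Your proof is correct, but it follows a genuinely different route from the one written in Section~\ref{S:Generating}. The paper does \emph{not} invert the characteristic coordinates: instead it writes down the general solution $S(x,y)=\frac{e^{1/x}F\bigl(\frac{2xy+2x-1}{2x^2}\bigr)}{x}$ of the PDE (\ref{A:PDE}) (verifiable by direct substitution), determines the unknown one-variable function $F$ from the boundary condition $v(x,0)=\frac{1}{1-x}$ --- which again requires solving a quadratic, inverting $z=\frac{2x-1}{2x^2}$ via $x=\frac{1-\sqrt{1-2z}}{2z}$, a branch choice the paper makes without comment --- and then substitutes $F$ back in. You instead take Theorem~\ref{T:GF2} at face value and invert the coordinate change $(r,s)\mapsto(x,y)$; your computation checks out: $r=\frac{x}{1-xs}$, $rs+1=\frac{1}{1-xs}$ do reduce the second relation to $xs^2+2(x-1)s+2y=0$, the minus branch is correctly forced by $s=0\Leftrightarrow y=0$ near the origin, and with $R:=\sqrt{x^2-2xy-2x+1}$ the identities $1-xs=x+R$, $\frac{rs+1}{1-r}=\frac{1}{R}$ and the factorization $-x^2+2xy+2x-1+xR=R(x-R)$ give exactly the displayed formula. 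Interestingly, your route is the one the paper itself advertises in the introduction (``the relations in (\ref{R:reversible:intro}) are invertible, therefore, by back substitution\dots''), so you have in effect supplied the proof the introduction promises, and your endgame is cleaner: the collapse to $v=e^s/R$ explains transparently why the unwieldy stated expression is right, and your branch selection is actually justified, unlike the paper's. What the paper's route buys in exchange is logical independence: since its general solution can be verified by substituting into (\ref{A:PDE}), the corollary does not lean on the characteristic-curve computation, whose proof the paper only ``outlines''; your argument inherits whatever rigor Theorem~\ref{T:GF2} carries. Two small points of bookkeeping: the invertibility you cite is asserted in the sentence following Theorem~\ref{T:gs:intro}, not in the theorem statement itself; and it is worth observing that $s=\frac{1-x-R}{x}$, despite the visible $1/x$, is analytic at the origin (as $x\to 0$ one has $1-x-R=xy+O(x^2)$, so $s\to y$, consistent with $x(0,s)=0$, $y(0,s)=s$), which is the concrete content behind your appeal to local invertibility.
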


\begin{proof}
The general solution $S(x,y)$ of (\ref{A:PDE}) is given as follows:
\begin{align}\label{A:PdeSolt}
S(x, y)= \frac{ e^{1/x} F(\frac{2xy+2x-1}{2x^2})}{x}
\end{align}
where $F(z)$ is some function in one-variable. 
(This can be verified by substituting (\ref{A:PdeSolt}) into (\ref{A:PDE}).)

We want to choose $F(z)$ in such a way that $S(x,y)=v(x,y)$
holds true. To do so, first, consider the case where $y=0$, 
then by definition $v(x,0) = \frac{1}{1-x}$.
Thus, $F(z)$ satisfies the following equation  
\begin{align}\label{A:Gsolt}
\frac{ e^{1/x} F(\frac{2x-1}{2x^2})}{x} = \frac{1}{1-x} \;\;\; \text{or} \;\;\;
F \biggr(\frac{2x-1}{2x^2} \biggr) = \frac{xe^{-1/x}}{1-x}.
\end{align}
Now we take the inverse of the transformation $z= \frac{2x-1}{2x^2}$ 
in (\ref{A:Gsolt}) to have $x=\frac{1- \sqrt{1-2z}}{2z}$ and thus,
\begin{align}
F(z) = \frac{e^{-1-\sqrt{1-2z}} (1-\sqrt{1-2z})}{2z-1+\sqrt{1-2z}}.
\end{align}
Next, we substitute this into the general solution (\ref{A:PdeSolt}) 
and conclude that 
\begin{align*}
v(x,y) &= \frac{{e^{\frac{1-x}{x}- \frac{\sqrt{x^2-2xy-2x+1}}x}}\biggr(\frac{x-\sqrt{x^2-2xy-2x+1}}{x}\biggr)} {\frac{-x^2+2xy+2x-1+x\sqrt{x^2-2xy-2x+1}}{x}} \notag \\
&=\frac{e^{\frac{1-x-\sqrt{x^2-2xy-2x+1}}{x}} (x-\sqrt{x^2-2xy-2x+1})}{-x^2+2xy+2x-1+x\sqrt{x^2-2xy-2x+1}}
\end{align*}
This finishes the proof.
\end{proof}

\section{Combinatorial interpretations}\label{S:Interpretation}

In this section, we will give the proofs of Proposition~\ref{P:anotherpaththeorem}
and Theorem~\ref{T:anotherpaththeorem:intro}. 
Recall that the set of all $(p,q)$ Delannoy paths is denoted by 
$\mc{D}(p,q)$ and the weight of an element $L\in \mc{D}(p,q)$ 
is the product of the weights of the steps in $L$. 
Recall also that the weight of a step $L_i=((a_i,b_i),(a_{i+1},b_{i+1}))$ in 
$L$ is defined $a_{i+1}+b_{i+1}-1$ if $L_i$ is a diagonal step; it is 
1, otherwise.

\begin{Proposition} 
If $p$ and $q$ are two nonnegative integers, then 
\begin{align}\label{L:First combinatorial}
\alpha_{p,q} = \sum_{L \in \mc{D}(p,q)} \omega(L).
\end{align}
\end{Proposition}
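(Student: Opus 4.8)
The plan is to prove the identity by showing that the right-hand side, viewed as a function of $(p,q)$, satisfies the very same recurrence and initial conditions as $\alpha_{p,q}$ established in Corollary~\ref{C:First recurrence}, and then to invoke uniqueness of the solution. Accordingly, I would set $S(p,q) := \sum_{L \in \mc{D}(p,q)} \omega(L)$ and argue that $S(p,q) = \alpha_{p,q}$ by induction on $p+q$.

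First I would check the boundary cases. When $q=0$ the only path in $\mc{D}(p,0)$ is the string of $p$ east steps, each of weight $1$, so $\omega(L)=1$ and $S(p,0)=1=\alpha_{p,0}$; symmetrically $S(0,q)=1=\alpha_{0,q}$. Next, for the inductive step I would partition $\mc{D}(p,q)$ according to the \emph{last} step of a path $L=L_1\cdots L_r$, which necessarily terminates at $(p,q)$. There are exactly three cases: the last step is an east step from $(p-1,q)$, a north step from $(p,q-1)$, or a diagonal step from $(p-1,q-1)$. Deleting this last step produces, respectively, a path in $\mc{D}(p-1,q)$, $\mc{D}(p,q-1)$, or $\mc{D}(p-1,q-1)$, and this deletion sets up a bijection between $\mc{D}(p,q)$ and the disjoint union of these three sets together with the choice of terminal step.

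The decisive observation is that the weight of each step is \emph{local}: by definition the weight of a step depends only on that step (for a diagonal step, only on its endpoint via $a+b+1$). Hence deleting the terminal step of $L$ leaves the weights of all remaining steps unchanged, so that $\omega(L)=\omega(L')\cdot \mathrm{weight}(L_r)$, where $L'$ is the truncated path. The east and north terminal steps contribute a factor $1$, while a diagonal step ending at $(p,q)$, i.e. running from $(p-1,q-1)$, has weight $(p-1)+(q-1)+1=p+q-1$. Summing over the three cases therefore yields
\begin{align*}
S(p,q) = S(p-1,q) + S(p,q-1) + (p+q-1)\,S(p-1,q-1),
\end{align*}
which is precisely the recurrence~(\ref{A:recurrence}) satisfied by $\alpha_{p,q}$. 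Together with the matching boundary values, the induction hypothesis $S=\alpha$ at smaller $p+q$ forces $S(p,q)=\alpha_{p,q}$, completing the proof.

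I do not anticipate a serious obstacle here; the only point requiring care is the weight factorization $\omega(L)=\omega(L')\cdot\mathrm{weight}(L_r)$, which must be justified by the locality of the weight function rather than taken for granted. The rest is a routine three-way decomposition by the terminal step, exactly mirroring the combinatorial partition used in the proof of Theorem~\ref{T:rec}.
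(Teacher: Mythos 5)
Your proof is correct and is essentially the paper's own argument: the paper likewise partitions $\mc{D}(p,q)$ into $\mc{D}_N(p,q)$, $\mc{D}_E(p,q)$, and $\mc{D}_D(p,q)$ according to the last step, observes that deleting that step is weight-preserving except for the factor $p+q-1$ contributed by a terminal diagonal step, and concludes by matching the recurrence of Corollary~\ref{C:First recurrence} together with the boundary values. Your explicit justification of the factorization $\omega(L)=\omega(L')\cdot \mathrm{weight}(L_r)$ via locality of the weight function is a point the paper leaves implicit, but the route is the same.
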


\begin{proof}

The following special cases are easy to verify:
\begin{align*}
\alpha_{p,0} = \sum_{L \in \mc{D}(p,0)} \omega(L),\ 
\alpha_{0,q} = \sum_{L \in \mc{D}(0,q)} \omega(L),\ \text { and }
\alpha_{1,1} = \sum_{L \in \mc{D}(1,1)} \omega(L).
\end{align*}
Thus, to prove (\ref{L:First combinatorial}),
it suffices to prove that the right hand side of it 
satisfies the recurrence in Corollary~\ref{C:First recurrence}.

To this end, we partition $\mc{D}(p,q)$ into three disjoint subsets,
denoted by $\mc{D}_N(p,q)$, $\mc{D}_E(p,q)$, and $\mc{D}_D(p,q)$, 
where $\mc{D}_A(p,q)$ ($A\in \{N,E,D\}$) consists of 
$(p,q)$ Delannoy paths $L\in \mc{D}(p,q)$ whose last step is an $A$-step.  
Observe that, the map $\mc{D}_N(p,q)\rightarrow \mc{D}(p,q-1)$
that is defined by omitting the last (north) step is a weight preserving bijection.  
In a similar way, the map $\mc{D}_E(p,q)\rightarrow \mc{D}(p-1,q)$
that is defined by omitting the last (east) step is a weight preserving bijection.
Finally,  the map $\psi: \mc{D}_D(p,q)\rightarrow \mc{D}(p-1,q-1)$
that is defined by omitting the last (diagonal) step is a bijection and the 
weight of an element in its image, say $\psi(L)$, is $p+q-1$ times 
the weight of $L$. Putting these observations together we see that 
\begin{align*}
\sum_{L \in \mc{D}(p,q)} \omega(L) &= \sum_{L \in \mc{D}_N(p,q)} \omega(L) 
+ \sum_{L \in \mc{D}_E(p,q)} \omega(L)  +\sum_{L \in \mc{D}_D(p,q)} \omega(L) \\
&= \sum_{L \in \mc{D}(p,q-1)} \omega(L) 
+ \sum_{L \in \mc{D}(p-1,q)} \omega(L)  +\sum_{L \in \mc{D}(p-1,q-1)} (p+q-1)\omega(L).
\end{align*}
But this is the recurrence that we wanted to verify, therefore, the proof is finished. 
\end{proof}

Recall that a weighted $(p,q)$ Delannoy path
is a Delannoy path with certain weights on its diagonal steps;
on the $i$-th diagonal step, the weight 
is a positive integer between $1$ and $i-1$.
Recall also that the set of all weighted 
Delannoy paths is denoted by $\mc{P}(p,q)$. 

We are ready to prove our next main result.

\begin{Theorem}\label{T:anotherpaththeorem:body}
There is a bijection between the set of weighted $(p,q)$ Delannoy 
paths and the set of signed $(p,q)$-involutions.
In particular, we have $\alpha_{p,q} = \sum_{W\in \mc{P}(p,q)} 1$.
\end{Theorem}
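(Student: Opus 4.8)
The plan is to establish the claimed bijection by a counting argument that reduces to the already-proven recurrence, rather than constructing an explicit map between the two sets directly. The key observation is that both sides are governed by the same recurrence from Corollary~\ref{C:First recurrence}, so it suffices to show that $|\mc{P}(p,q)|$ satisfies it. By the definition of weighted Delannoy paths, a path $W\in\mc{P}(p,q)$ is obtained from an ordinary Delannoy path $L\in\mc{D}(p,q)$ by choosing, independently on each $k$-diagonal step, a label $m$ with $1\leq m\leq k-1$. Hence the number of weighted paths lying over a fixed $L$ is the product $\prod_i(k_i-1)$ over the diagonal steps of $L$, where the $i$-th diagonal step is a $k_i$-diagonal step. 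But $k_i - 1$ is exactly $weight(L_i)$ for a diagonal step (since $weight$ of a diagonal step $((a,b),(a+1,b+1))$ is $a+b+1 = k_i$, and the label range has size $k_i-1$)---so I must be careful here, and the weight bookkeeping is the first thing to verify precisely.

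First I would pin down the relationship between the number of labellings over $L$ and $\omega(L)$ from Proposition~\ref{P:anotherpaththeorem}. Since $N$- and $E$-steps carry weight $1$ and force $m=1$ (a single choice), while a $k$-diagonal step carries weight $a+b+1$ and admits $m\in\{1,\dots,k-1\}$ choices, I need the number of labellings to equal $\omega(L)$, i.e.\ the number of label choices on each diagonal step should match its weight. Comparing the two definitions, the weight of a $k$-diagonal step is $k = a+b+1$, whereas the label count is $k-1$; these differ by one. Resolving this discrepancy is the main obstacle, and it signals that the correct reading of the two definitions must be reconciled: I would recheck whether the diagonal step that increases the grid from $(a,b)$ to $(a+1,b+1)$ is being indexed as a $(a+b+1)$-diagonal step in one place and $(a+b+2)$ in the other, or whether ``weight'' in Proposition~\ref{P:anotherpaththeorem} corresponds to label-count-plus-one. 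Once the indexing convention is fixed so that the number of admissible labels on each diagonal step equals $weight(L_i)$, the product over all steps gives $|\{W\in\mc{P}(p,q): W \text{ lies over } L\}| = \omega(L)$.

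Granting that bookkeeping, the proof concludes cleanly. Summing over all underlying Delannoy paths gives
\begin{align*}
\sum_{W\in\mc{P}(p,q)} 1 = \sum_{L\in\mc{D}(p,q)} \#\{W \text{ over } L\} = \sum_{L\in\mc{D}(p,q)} \omega(L) = \alpha_{p,q},
\end{align*}
where the last equality is exactly Proposition~\ref{P:anotherpaththeorem}. This proves the cardinality statement $\alpha_{p,q} = \sum_{W\in\mc{P}(p,q)} 1$. For the stronger assertion that there is an honest bijection with $\si$, I would then promote this counting identity to an explicit map. The natural candidate is to compose the (already-understood) correspondence encoding signed involutions as weighted Delannoy data: each diagonal step records a $2$-cycle and its label records which of the available smaller entries it pairs with, while $N$- and $E$-steps record the placement of a signed fixed point ($-$ versus $+$, say). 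Tracking how the recurrence in Theorem~\ref{T:rec} removes the largest entry $n$---landing in $\gamma_{k,p-1,q}$, $\gamma_{k,p,q-1}$, or one of the $n-1$ copies of $\gamma_{k-1,p-1,q-1}$---mirrors exactly the partition of $\mc{D}(p,q)$ into $\mc{D}_N,\mc{D}_E,\mc{D}_D$ with the $(p+q-1)$-fold diagonal branching, so the bijection can be built inductively step-by-step, with the diagonal label $m$ recording the choice of partner $i\in[n-1]$ for $n$.

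The routine part is the induction verifying the three weight-preserving or weight-scaling bijections on the path side, which is already done in Proposition~\ref{P:anotherpaththeorem}; the delicate part is solely the off-by-one reconciliation between the ``weight'' and the ``label range,'' together with confirming that the inductive insertion/deletion maps $\psi_k(\pm)$ and $\psi_k(i)$ from the proof of Theorem~\ref{T:rec} are precisely the image of the path-level deletion maps under the correspondence. I expect no difficulty in the exponential-generating-function or formula-level checks, since Proposition~\ref{P:anotherpaththeorem} has already reduced everything to the shared recurrence.
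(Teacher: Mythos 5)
Your argument is correct in substance, but it splits the theorem into two halves, only one of which matches the paper. For the cardinality claim you fiber $\mc{P}(p,q)$ over $\mc{D}(p,q)$, observe that the number of admissible labellings of a fixed path $L$ equals $\omega(L)$, and invoke Proposition~\ref{P:anotherpaththeorem}; the paper never does this --- it proves both claims at once by the algorithmic construction (peel off the largest letter $n$: a signed fixed point $n^{+}$ or $n^{-}$ becomes an $E$- or $N$-step, a $2$-cycle $(i,n)$ becomes a $D$-step labelled $i$), with bijectivity inherited wholesale from the proof of Theorem~\ref{T:rec}. Your second half --- promoting the count to an explicit bijection via exactly that peeling procedure, with the diagonal label recording the partner $i\in[n-1]$ of $n$ --- \emph{is} the paper's entire proof, so the net comparison is: your detour buys the identity $\alpha_{p,q}=\sum_{W\in\mc{P}(p,q)}1$ with no new induction beyond what Proposition~\ref{P:anotherpaththeorem} already contains, but since the counting argument alone cannot produce the bijection asserted in the statement, you still need the paper's construction anyway. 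Separately, the ``off-by-one'' you flagged is a genuine inconsistency in the paper's printed definitions, not a defect of your argument, and your proposed reconciliation is the right one: with the literal definition ($k=a+b+1$ and $1\le m_i\le k-1$) a $D$-step out of $(a,b)$ would carry only $a+b$ labels, giving for instance $|\mc{P}(1,1)|=2\ne 3=\alpha_{1,1}$; whereas the weight function, the coefficient $p+q-1$ in the recurrence, Figure~\ref{F:LPaths} (where the diagonal step out of $(1,1)$ visibly carries the labels $1,2,3$), and the proof's use of $i\in[n-1]$ all force $a+b+1$ label choices on that step --- i.e.\ the range should read $1\le m_i\le k$ with $k=a+b+1$, or equivalently diagonal steps should be indexed by the coordinate sum of their endpoint.
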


\begin{proof}

Let $\pi = \pi^{(0)}=(i_1, j_1) \dots (i_k,j_k) l_1\dots l_{n-2k}$ 
be a signed involution from $I_{p,q}^\pm$. To construct the 
corresponding weighted path we proceed algorithmically 
as in the proof of Theorem~\ref{T:rec}.

First, we look at where $n=p+q$ appears in $\pi$. 
If it appears as a fixed point with a $+$ sign, then we draw a $E$-step 
between $(p,q)$ and $(p-1,q)$. If it appears as a fixed point with a $-$ sign, 
then we draw a $N$-step between $(p,q)$ and
$(p,q-1)$. In the these cases, removing $n$ from $\pi$ results in an involution, 
that we denote by $\pi^{(1)}$, in either $I_{k,p-1,q}^\pm$ or $I_{k,p,q-1}^\pm$. 
If $n$ appears as the second entry of one of the 2-cycles, say $(i_r,j_r)=(i,n)$ 
(for some $r$ and $i$), then we draw a 
$D$-step between $(p,q)$ and $(p-1,q-1)$, label it with $i$, and then 
we remove the two cycle $(i,n)$ from $\pi$ and 
reduce every number that is bigger than $i$ in $\pi$ by $-1$. 
Hence we obtain an element $\pi^{(1)}$ of $I_{k-1,p-1,q-1}^\pm$. 
We see from the proof of Theorem~\ref{T:rec} 
that this algorithm results in a bijection. 
\end{proof}

\vspace{.5cm}

{\em \textbf{Notation:}}
The bijection $\phi$ that is constructed in the proof of
Theorem~\ref{T:anotherpaththeorem:body} will be denoted by $\phi$, 
without giving any reference to the indices $p$ and $q$.

\begin{Example}

Let $\pi = (1,4)(3,8) 2^+ 5^+ 6^+ 7^- $. Then $p+q = 8$ and $p-q=2$, hence $p=5,q=3$.
Then $\phi(\pi)$ is the path that is in the last picture of Figure~\ref{F:last pic}.
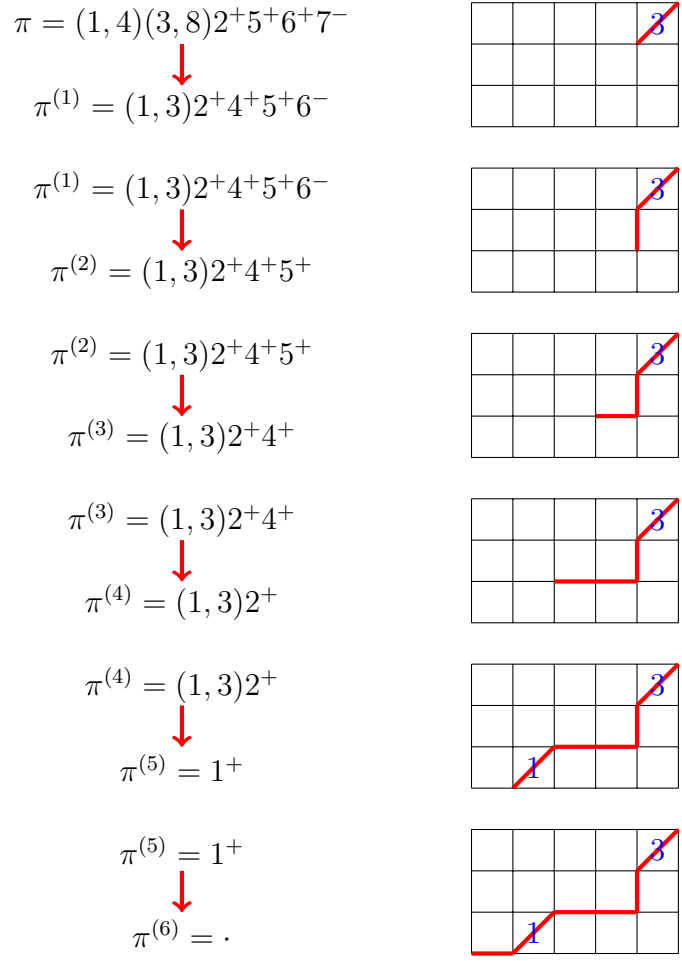
\begin{figure}[h]
\begin{center}
\begin{tikzpicture}[scale=.55]

\begin{scope}[xshift = -3.5cm]
\node at (0,2.5) {$\pi = (1,4)(3,8) 2^+ 5^+ 6^+ 7^- $};
\node at (0,.5) {$\pi^{(1)}= (1,3)2^+ 4^+ 5^+ 6^- $};
\draw[ultra thick,red, ->] (0,2) to (0,1);
\end{scope}

\begin{scope}[xshift = 3.5cm]
\draw (0,0) grid (5,3);
\draw[ultra thick,red] (5,3) to (4,2);
\node[blue] at (4.5,2.5) {$3$};

\end{scope}

\begin{scope}[xshift = -3.5cm, yshift=-4cm]
\node at (0,2.5) {$\pi^{(1)} = (1,3)2^+ 4^+ 5^+ 6^- $};
\node at (0,.5) {$\pi^{(2)} = (1,3)2^+ 4^+ 5^+ $};
\draw[ultra thick,red, ->] (0,2) to (0,1);
\end{scope}

\begin{scope}[xshift = 3.5cm,  yshift=-4cm]
\draw (0,0) grid (5,3);
\draw[ultra thick,red] (5,3) to (4,2);
\draw[ultra thick,red] (4,2) to (4,1);
\node[blue] at (4.5,2.5) {$3$};
\end{scope}

\begin{scope}[xshift = -3.5cm, yshift=-8cm]
\node at (0,2.5) {$\pi^{(2)} = (1,3)2^+ 4^+ 5^+ $};
\node at (0,.5) {$\pi^{(3)} = (1,3)2^+ 4^+ $};
\draw[ultra thick,red, ->] (0,2) to (0,1);
\end{scope}

\begin{scope}[xshift = 3.5cm,  yshift=-8cm]
\draw (0,0) grid (5,3);
\draw[ultra thick,red] (5,3) to (4,2);
\draw[ultra thick,red] (4,2) to (4,1);
\draw[ultra thick,red] (4,1) to (3,1);
\node[blue] at (4.5,2.5) {$3$};
\end{scope}

\begin{scope}[xshift = -3.5cm, yshift=-12cm]
\node at (0,2.5) {$\pi^{(3)} =(1,3)2^+ 4^+ $};
\node at (0,.5) {$\pi^{(4)} = (1,3)2^+ $};
\draw[ultra thick,red, ->] (0,2) to (0,1);
\end{scope}

\begin{scope}[xshift = 3.5cm,  yshift=-12cm]
\draw (0,0) grid (5,3);
\draw[ultra thick,red] (5,3) to (4,2);
\draw[ultra thick,red] (4,2) to (4,1);
\draw[ultra thick,red] (4,1) to (3,1);
\draw[ultra thick,red] (3,1) to (2,1);
\node[blue] at (4.5,2.5) {$3$};
\end{scope}

\begin{scope}[xshift = -3.5cm, yshift=-16cm]
\node at (0,2.5)  {$\pi^{(4)} = (1,3)2^+ $};
\node at (0,.5) {$\pi^{(5)} = 1^+$};
\draw[ultra thick,red, ->] (0,2) to (0,1);
\end{scope}

\begin{scope}[xshift = 3.5cm,  yshift=-16cm]
\draw (0,0) grid (5,3);
\draw[ultra thick,red] (5,3) to (4,2);
\draw[ultra thick,red] (4,2) to (4,1);
\draw[ultra thick,red] (4,1) to (3,1);
\draw[ultra thick,red] (3,1) to (2,1);
\draw[ultra thick,red] (2,1) to (1,0);
\node[blue] at (4.5,2.5) {$3$};
\node[blue] at (1.5,.5) {$1$};
\end{scope}

\begin{scope}[xshift = -3.5cm, yshift=-20cm]
\node at (0,2.5) {$\pi^{(5)} = 1^+$};
\node at (0,.5) {$\pi^{(6)} = \cdot$};
\draw[ultra thick,red, ->] (0,2) to (0,1);
\end{scope}

\begin{scope}[xshift = 3.5cm,  yshift=-20cm]
\draw (0,0) grid (5,3);
\draw[ultra thick,red] (5,3) to (4,2);
\draw[ultra thick,red] (4,2) to (4,1);
\draw[ultra thick,red] (4,1) to (3,1);
\draw[ultra thick,red] (3,1) to (2,1);
\draw[ultra thick,red] (2,1) to (1,0);
\draw[ultra thick,red] (1,0) to (0,0);
\node[blue] at (4.5,2.5) {$3$};
\node[blue] at (1.5,.5) {$1$};
\end{scope}

\end{tikzpicture}

\end{center}
\caption{Algorithmic construction of $\phi$.}
\label{F:last pic}
\end{figure}

\end{Example}

The covering relations of the weak order, equivalently 
the action of the Richardson-Springer monoid on $I_{p,q}^\pm$, 
as described in~\cite[Figure 2.5]{CMW}, are easy to express in 
terms of certain simple operations on the elements of $\mc{P}(p,q)$.
For the Bruhat order, the covering relations are described in
~\cite[Theorem 2.8]{Wyser16} and we are able to translate these 
relations into our language without difficulty. 
In Figure~\ref{F:LPaths}, we illustrated both of the 
Bruhat and the weak orders on $\mc{P}(2,2)$. The dashed lines
indicates the edges in the Bruhat order that are not present
in the weak order.

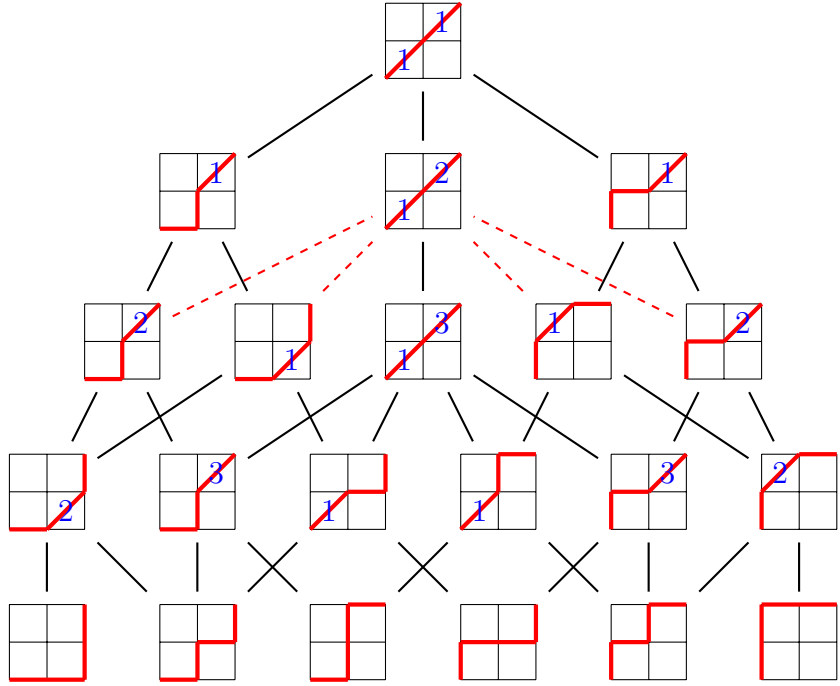
\begin{figure}[htp]
\begin{center}
\begin{tikzpicture}[scale=.2]
\node at (-25,0) (a1) {$\begin{tikzpicture}[scale=.5]
\draw (0,0) grid (2,2);
	\draw[ultra thick,red] (0,0) to (2,0);
	\draw[ultra thick,red] (2,0) to (2,2);
\end{tikzpicture}	$};
\node at (-15,0) (a2) {$\begin{tikzpicture}[scale=.5]
\draw (0,0) grid (2,2);
	\draw[ultra thick,red] (0,0) to (1,0);
	\draw[ultra thick,red] (1,0) to (1,1);
		\draw[ultra thick,red] (1,1) to (2,1);
		\draw[ultra thick,red] (2,1) to (2,2);
\end{tikzpicture}	$};
\node at (-5,0) (a3) {$\begin{tikzpicture}[scale=.5]
\draw (0,0) grid (2,2);
	\draw[ultra thick,red] (0,0) to (1,0);
		\draw[ultra thick,red] (1,0) to (1,2);
	\draw[ultra thick,red] (1,2) to (2,2);
\end{tikzpicture}	$};
\node at (5,0) (a4) {$\begin{tikzpicture}[scale=.5]
\draw (0,0) grid (2,2);
	\draw[ultra thick,red] (0,0) to (0,1);
	\draw[ultra thick,red] (0,1) to (2,1);
		\draw[ultra thick,red] (2,1) to (2,2);
\end{tikzpicture}	$};
\node at (15,0) (a5) {$\begin{tikzpicture}[scale=.5]
\draw (0,0) grid (2,2);
	\draw[ultra thick,red] (0,0) to (0,1);
		\draw[ultra thick,red] (0,1) to (1,1);
	\draw[ultra thick,red] (1,1) to (1,2);
	\draw[ultra thick,red] (1,2) to (2,2);
\end{tikzpicture}	$};
\node at (25,0) (a6) {$\begin{tikzpicture}[scale=.5]
\draw (0,0) grid (2,2);
	\draw[ultra thick,red] (0,0) to (0,2);
	\draw[ultra thick,red] (0,2) to (2,2);
\end{tikzpicture}	$};

\node at (-25,10) (b1) {$\begin{tikzpicture}[scale=.5]
\draw (0,0) grid (2,2);
	\draw[ultra thick,red] (0,0) to (1,0);
	\draw[ultra thick,red] (1,0) to (2,1);
	\draw[ultra thick,red] (2,1) to (2,2);
	\node[blue] at (1.5,.5) {$2$};
\end{tikzpicture}	$};
\node at (-15,10) (b2) {$\begin{tikzpicture}[scale=.5]
\draw (0,0) grid (2,2);
	\draw[ultra thick,red] (0,0) to (1,0);
	\draw[ultra thick,red] (1,0) to (1,1);
	\draw[ultra thick,red] (1,1) to (2,2);
	\node[blue] at (1.5,1.5) {$3$};
\end{tikzpicture}	$};
\node at (-5,10) (b3) {$\begin{tikzpicture}[scale=.5]
\draw (0,0) grid (2,2);
	\draw[ultra thick,red] (0,0) to (1,1);
	\draw[ultra thick,red] (1,1) to (2,1);
	\draw[ultra thick,red] (2,1) to (2,2);
	\node[blue] at (.5,.5) {$1$};
\end{tikzpicture}	$};
\node at (5,10) (b4) {$\begin{tikzpicture}[scale=.5]
\draw (0,0) grid (2,2);
	\draw[ultra thick,red] (0,0) to (1,1);
	\draw[ultra thick,red] (1,1) to (1,2);
	\draw[ultra thick,red] (1,2) to (2,2);
	\node[blue] at (.5,.5) {$1$};
\end{tikzpicture}	$};
\node at (15,10) (b5) {$\begin{tikzpicture}[scale=.5]
\draw (0,0) grid (2,2);
	\draw[ultra thick,red] (0,0) to (0,1);
	\draw[ultra thick,red] (0,1) to (1,1);
	\draw[ultra thick,red] (1,1) to (2,2);
	\node[blue] at (1.5,1.5) {$3$};
\end{tikzpicture}	$};
\node at (25,10) (b6) {$\begin{tikzpicture}[scale=.5]
\draw (0,0) grid (2,2);
	\draw[ultra thick,red] (0,0) to (0,1);
	\draw[ultra thick,red] (0,1) to (1,2);
	\draw[ultra thick,red] (1,2) to (2,2);
	\node[blue] at (.5,1.5) {$2$};
\end{tikzpicture}	$};

\node at (-20,20) (c1) {$\begin{tikzpicture}[scale=.5]
\draw (0,0) grid (2,2);
	\draw[ultra thick,red] (0,0) to (1,0);
	\draw[ultra thick,red] (1,0) to (1,1);
	\draw[ultra thick,red] (1,1) to (2,2);
	\node[blue] at (1.5,1.5) {$2$};
\end{tikzpicture}	$};
\node at (-10,20) (c2) {$\begin{tikzpicture}[scale=.5]
\draw (0,0) grid (2,2);
	\draw[ultra thick,red] (0,0) to (1,0);
	\draw[ultra thick,red] (1,0) to (2,1);
	\draw[ultra thick,red] (2,1) to (2,2);
		\node[blue] at (1.5,.5) {$1$};
\end{tikzpicture}	$};
\node at (0,20) (c3) {$\begin{tikzpicture}[scale=.5]
\draw (0,0) grid (2,2);
	\draw[ultra thick,red] (0,0) to (2,2);
	\node[blue] at (.5,.5) {$1$};
	\node[blue] at (1.5,1.5) {$3$};
\end{tikzpicture}	$};
\node at (10,20) (c4) {$\begin{tikzpicture}[scale=.5]
\draw (0,0) grid (2,2);
	\draw[ultra thick,red] (0,0) to (0,1);
	\draw[ultra thick,red] (0,1) to (1,2);
	\draw[ultra thick,red] (1,2) to (2,2);
		\node[blue] at (.5,1.5) {$1$};
\end{tikzpicture}	$};
\node at (20,20) (c5) {$\begin{tikzpicture}[scale=.5]
\draw (0,0) grid (2,2);
	\draw[ultra thick,red] (0,0) to (0,1);
	\draw[ultra thick,red] (0,1) to (1,1);
	\draw[ultra thick,red] (1,1) to (2,2);	
	\node[blue] at (1.5,1.5) {$2$};
\end{tikzpicture}	$};

\node at (-15,30) (d1) {$\begin{tikzpicture}[scale=.5]
\draw (0,0) grid (2,2);
	\draw[ultra thick,red] (0,0) to (1,0);
	\draw[ultra thick,red] (1,0) to (1,1);
	\draw[ultra thick,red] (1,1) to (2,2);
	\node[blue] at (1.5,1.5) {$1$};
\end{tikzpicture}	$};
\node at  (0,30) (d2) {$\begin{tikzpicture}[scale=.5]
\draw (0,0) grid (2,2);
	\draw[ultra thick,red] (0,0) to (2,2);
	\node[blue] at (.5,.5) {$1$};
	\node[blue] at (1.5,1.5) {$2$};
\end{tikzpicture}	$};
\node at  (15,30) (d3) {$\begin{tikzpicture}[scale=.5]
\draw (0,0) grid (2,2);
	\draw[ultra thick,red] (0,0) to (0,1);
	\draw[ultra thick,red] (0,1) to (1,1);
	\draw[ultra thick,red] (1,1) to (2,2);
	\node[blue] at (1.5,1.5) {$1$};
\end{tikzpicture}	$};

\node at (0,40) (e) {$\begin{tikzpicture}[scale=.5]
\draw (0,0) grid (2,2);
	\draw[ultra thick,red] (0,0) to (2,2);
	\node[blue] at (.5,.5) {$1$};
	\node[blue] at (1.5,1.5) {$1$};
\end{tikzpicture}	$};

\draw[-,  thick] (a1) to (b1);
\draw[-,  thick] (a2) to (b1);
\draw[-,  thick] (a2) to (b2);
\draw[-,  thick] (a2) to (b3);
\draw[-,  thick] (a3) to (b2);
\draw[-,  thick] (a3) to (b4);
\draw[-,  thick] (a4) to (b3);
\draw[-,  thick] (a4) to (b5);
\draw[-,  thick] (a5) to (b4);
\draw[-,  thick] (a5) to (b5);
\draw[-,  thick] (a5) to (b6);
\draw[-,  thick] (a6) to (b6);

\draw[-, thick] (b1) to (c1);
\draw[-,  thick] (b1) to (c2);
\draw[-, thick] (b2) to (c1);
\draw[-, thick] (b2) to (c3);
\draw[-,  thick] (b3) to (c2);
\draw[-, thick] (b3) to (c3);
\draw[-, thick] (b4) to (c3);
\draw[-, thick] (b4) to (c4);
\draw[-, thick] (b5) to (c3);
\draw[-, thick] (b5) to (c5);
\draw[-, thick] (b6) to (c4);
\draw[-, thick] (b6) to (c5);

\draw[-, thick] (c1) to (d1);
\draw[-, thick] (c2) to (d1);
\draw[dashed, thick, red] (c1) to (d2);
\draw[dashed, thick, red] (c2) to (d2);
\draw[-, thick] (c3) to (d2);
\draw[dashed, thick, red] (c4) to (d2);
\draw[dashed, thick, red] (c5) to (d2);
\draw[-, thick] (c4) to (d3);
\draw[-, thick] (c5) to (d3);

\draw[-, thick] (d1) to (e);
\draw[-, thick] (d2) to (e);
\draw[-, thick] (d3) to (e);
\end{tikzpicture}

\caption{The (weak) Bruhat order on weighted Delannoy paths.}
\label{F:LPaths}
\end{center}
\end{figure}

\section{Polynomial analogs}\label{S:Analogs}

In this section, we will consider various 
$t$-analogs of $\alpha_{p,q}$'s. 
In particular, we will investigate the length generating function 
of the (weak) Bruhat order on $\mc{P}(p,q)$.

\subsection{The weight generating function.}

Our first polynomial analog of the $\alpha_{p,q}$'s is defined as follows:
\begin{align}\label{A:definition of D}
D_{p,q}(t) := \frac{1}{t}\sum_{L \in \mc{D}(p,q)} t^{ \omega(\pi)}.
\end{align}
This is the generating function, up to a factor of $t$, 
for the weight function $\omega$ as in (\ref{A:weightofL}). 
It follows from the definitions that 
$D_{p,q}(t)$ obeys the recurrence 
\begin{align}\label{A:obeys1}
D_{p,q}(t) = D_{p-1,q}(t) + D_{p,q-1}(t) + t^{p+q-2} D_{p-1,q-1}(t^{p+q-1}).
\end{align}
In particular, we see from (\ref{A:obeys1}) that   
$$
\frac{\partial}{\partial t} (t D_{p,q}(t) ) |_{t=1} = \alpha_{p,q}.
$$
Obviously, $D_{p,q}(1)$ is nothing but the cardinality of the set 
$\mc{D}(p,q)$, that is the Delannoy number $D(p,q)$. 
The value at $t=0$ of $D_{p,q}(t)$ is easy to compute as well.
We will find it as the special case of a more general observation.

\vspace{.5cm}

Evaluating $D_{p,q}(t)$'s at other roots of unities also gives Delannoy numbers.
\begin{Proposition}
The value of the difference $D_{p,q}(t) - D_{p-1,q}(t) - D_{p,q-1}(t)$ at a $(p+q-1)$-th 
root of unity $\zeta$ is equal to $D(p-1,q-1)\zeta^{-1}$.
\end{Proposition}

\begin{proof}
It follows immediately from Definition (\ref{A:definition of D}) that 
$$
t(D_{p,q}(t) - D_{p-1,q}(t) - D_{p,q-1}(t)) = \sum_{P\in \mc{D}(p-1,q-1)} t^{(p+q-1) \omega(P)}.
$$
Therefore, evaluating both sides at $\zeta$ and then dividing by $\zeta$ gives 
$$
\frac{1}{\zeta} \sum_{P\in \mc{D}(p-1,q-1)} \zeta^{(p+q-1) \omega(P)} 
= \frac{1}{\zeta} \sum_{P\in \mc{D}(p-1,q-1)} 1^{ \omega(P)} = D(p-1,q-1) \zeta^{-1}.
$$
\end{proof}

Next, we are going to have a careful look at the coefficients of $D_{p,q}(t)$. 
It turns out they are always sums of products of binomial coefficients. 
Let $n\geq 1$ denote the degree of $D_{p,q}(t)$ and set 
$$
D_{p,q}(t) = a_0 + a_1 t + \cdots + a_n t^n \qquad (a_i\in \N).
$$
We start with the constant term, that is the value at $t=0$ of $\mc{D}_{p,q}(t)$. 
It is clear from our definition of the weight of a Delannoy path 
$L \in \mc{D}(p,q)$ that $\omega(L)=1$ if and only if $L$ 
has at most one diagonal step, which occurs as an initial diagonal step. 
Otherwise, the weight would be greater than 1. Consequently, 
\begin{align}\label{A:a_0}
a_0 = b_{p,q} + b_{p-1,q-1} = {p+q \choose q} + {p+q-2 \choose q-1 }.
\end{align}

Let $(a,b)$ and $(c,d)$ be two lattice points from the first quadrant 
in $\Z^2$ such that $a\leq c$ and $b\leq d$. 
We denote by $\mc{D}((a,b),(c,d))$ the set of all ``Delannoy 
paths'' that starts at $(a,b)$ and ends at $(c,d)$. In other words, 
$\mc{D}((a,b),(c,d))$ is the set of lattice paths that are obtained
from the Delannoy paths in $\mc{D}(c-a,d-b)$ by shifting their 
starting point to $(a,b)$.

Let $a_1 < \ldots < a_r \leq p$ and $b_1 <\ldots < b_r \leq q$ be two sequences
of nonnegative integers, denoted by $\text{a}$ and $\text{b}$, respectively. 
We define $\mc{D}_{\mt{a,b}}(p,q)$ as the subset of $\mc{D}(p,q)$ consisting 
of the lattice paths $L\in \mc{D}(p,q)$ with diagonal steps 
$L_i = ((a_i,b_i),(a_i+1,b_i+1))$ for $i=1,\dots,r$.  
Clearly, each element $L\in \mc{D}_{\mt{a,b}}(p,q)$ is a concatenation of 
$r+1$ lattice paths $L^{(1)},\dots, L^{(r)}$ each without a diagonal step.
More precisely, $L^{(i)} \in \mc{D}((a_i+1,b_i+1),(a_{i+1},b_{i +1}))$ 
and $L^{(i)}$ does not have any $D$-steps, for $i=0,\dots, r$. 
Here $(a_0,b_0)=(0,0)$ and $(a_{r+1},b_{r+1})=(p,q)$.
Therefore, the cardinality of $\mc{D}_{\mt{a,b}}(p,q)$ is given by 
\begin{align*}
\mc{D}_{\mt{a,b}}(p,q)= \prod_{i=0}^r | \mc{D}((a_i+1,b_i+1),(a_{i+1},b_{i +1})) | 
= \prod_{i=0}^r { a_{i+1} +b_{i+1} - a_i - b_i -2  \choose a_{i+1} - a_i-1 }.
\end{align*}
Note that the weight of any element $L \in \mc{D}_{\mt{a,b}}(p,q)$ is equal to 
\begin{align*}
\omega( L )= \prod_{i=0}^r ( a_{i} +b_{i} -1 ).
\end{align*}
Thus, by varying the number and the choice of the diagonal entries, 
we obtain a formula for $D_{p,q}(t)$.

\begin{Proposition}\label{P:explicit}
For all nonnegative integers $p$ and $q$, 
the explicit form of the weight generating polynomial $D_{p,q}(t)$ is given by
\begin{align}\label{A:A formula}
D_{p,q}(t) = \sum_{r=0}^{\min \{ q,p\}} 
\sum_{
\begin{subarray}{l}  
0\leq a_1 <\ldots <a_r \leq p  \\ 
0 \leq b_1 <\ldots < b_r \leq q
\end{subarray}}
\left( \prod_{i=0}^r { a_{i+1} +b_{i+1} - a_i - b_i -2  \choose a_{i+1} - a_i-1 }
 \right) t^{\prod_{i=0}^r ( a_{i} +b_{i} -1 )}.
\end{align}
\end{Proposition}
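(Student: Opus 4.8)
The plan is to prove the formula by partitioning $\mc{D}(p,q)$ according to the exact positions of the diagonal steps and then assembling the two facts established in the paragraphs immediately preceding the statement, namely the cardinality of $\mc{D}_{\mt{a,b}}(p,q)$ and the constancy of the weight $\omega$ on that set. First I would record that every Delannoy path $L\in\mc{D}(p,q)$ has a uniquely determined set of diagonal steps, whose lower-left corners I list as $(a_1,b_1),\dots,(a_r,b_r)$. Because a Delannoy path is weakly increasing in both coordinates and each diagonal step advances both coordinates by one, these corners satisfy $0\le a_1<\cdots<a_r$ and $0\le b_1<\cdots<b_r$. Grouping paths by this data therefore produces the disjoint decomposition
\begin{equation*}
\mc{D}(p,q)=\bigsqcup_{r=0}^{\min\{p,q\}}\ \bigsqcup_{\substack{0\le a_1<\cdots<a_r\le p\\ 0\le b_1<\cdots<b_r\le q}}\mc{D}_{\mt{a,b}}(p,q),
\end{equation*}
whose index set is exactly the range of summation in the statement. (Terms with $a_r=p$ or $b_r=q$ are vacuous, since the final binomial factor then becomes $\binom{\cdot}{-1}=0$, so their inclusion in the range is harmless.)

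Second, on a fixed block $\mc{D}_{\mt{a,b}}(p,q)$ the weight is constant, equal to $\prod_{i=1}^r(a_i+b_i+1)$, because the diagonal step with corner $(a_i,b_i)$ contributes the factor $a_i+b_i+1$ and the $E$- and $N$-steps contribute $1$; moreover the number of paths in the block is the product of binomial coefficients already computed. Since $\omega$ is constant on the block, its total contribution to $\sum_{L\in\mc{D}(p,q)}t^{\omega(L)}$ is $\lvert\mc{D}_{\mt{a,b}}(p,q)\rvert\,t^{\omega}$. Summing over all blocks and then dividing by $t$, as dictated by the definition $D_{p,q}(t)=\tfrac1t\sum_{L}t^{\omega(L)}$, yields the asserted double sum, with each block contributing $\lvert\mc{D}_{\mt{a,b}}(p,q)\rvert\,t^{\,\omega-1}$.

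Because the cardinality and the constant weight are already available, there is no analytic obstacle; the one point I would check carefully is the bookkeeping of the two boundary segments. Concretely, I would verify that the initial $E$/$N$ segment runs from $(0,0)$ to $(a_1,b_1)$ and the final $E$/$N$ segment from $(a_r+1,b_r+1)$ to $(p,q)$, so that they are counted by the $i=0$ and $i=r$ factors of the product under the conventions $(a_0,b_0)=(-1,-1)$ and $(a_{r+1},b_{r+1})=(p,q)$; this is precisely the choice making the $i=0$ factor equal $\binom{a_1+b_1}{a_1}$. As a sanity check, the $r=0$ term contributes $\binom{p+q}{q}t^{0}$ while the $r=1$ term supported at the corner $(0,0)$ contributes $\binom{p+q-2}{q-1}t^{0}$, so together they reproduce the constant term $a_0=\binom{p+q}{q}+\binom{p+q-2}{q-1}$ found earlier. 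Once these boundary terms are confirmed, the formula follows at once.
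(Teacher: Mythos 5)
Your proof is correct and is essentially the paper's own argument: the paper omits the formal proof precisely because it amounts to the partition of $\mc{D}(p,q)$ into the blocks $\mc{D}_{\mt{a,b}}(p,q)$ indexed by the diagonal-step positions, combined with the block cardinality and the constant weight computed in the discussion just before the Proposition. Your repair of the boundary bookkeeping — taking $(a_0,b_0)=(-1,-1)$ so that the $i=0$ factor becomes $\binom{a_1+b_1}{a_1}$, and writing the weight as $\prod_{i=1}^r(a_i+b_i+1)$ over the lower-left corners of the diagonal steps — correctly fixes an off-by-one slip in the paper's stated conventions (which declare $(a_0,b_0)=(0,0)$ and start the weight product at $i=0$), as your consistency check against the constant term $a_0=\binom{p+q}{q}+\binom{p+q-2}{q-1}$ confirms.
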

The proof of Proposition~\ref{P:explicit} follows from the previous
discussion so we omit it.

\begin{Remark}
In general, the polynomials $D_{p,q}(t)$ are not unimodal. 
\end{Remark}

\subsection{Length generating function.}

Our second $t$-analog has an algebro-geometric significance. 
Recall that the inclusion poset on the Borel orbit closures in 
$X=SL_{n}/S(GL_p\times GL_q)$ is the Bruhat order on signed
involutions $I_{p,q}^\pm$. This is a graded poset and its rank 
is equal to the length of its maximal element, which is 
$(1,n)\cdots ( q,n+1-q) (q+1)^+ \cdots (n-q)^+ $. 
The corresponding unique maximal element of 
$\mc{P}(p,q)$ is the path depicted in Figure~\ref{F:max}. 
Note that the closed Borel orbits in $X$ 
correspond to the weighted $(p,q)$ Delannoy paths 
without diagonal steps. In other words, the closed Borel orbits 
in $X$ correspond to the $(p,q)$ Grassmann paths.
See Remark~\ref{R:GrassmanPaths}.

\begin{figure}[htp]
\begin{center}
\begin{tikzpicture}[scale=.7]

\begin{scope}
\draw[dotted] (0,0) grid (9,4);

\node at (9.5,4.5) {$(p,q)$};
\node at (-.5,-.5) {$(0,0)$};
\node at (6,-.5) {$(p-q,0)$};
\draw[ultra thick,red] (9,4) to (7,2);
\draw[ultra thick,red,dotted] (7,2) to (6,1);
\draw[ultra thick,red] (6,1) to (5,0);
\draw[ultra thick,red] (5,0) to (4,0);
\draw[ultra thick,red,dotted] (4,0) to (1,0);
\draw[ultra thick,red] (1,0) to (0,0);
\node[blue] at (8.5,3.5) {$1$};
\node[blue] at (7.5,2.5) {$1$};
\node[blue] at (5.5,.5) {$1$};
\end{scope}

\end{tikzpicture}

\end{center}
\caption{The maximal element of the (weak) Bruhat order on $\mc{P}(p,q)$.}
\label{F:max}
\end{figure}
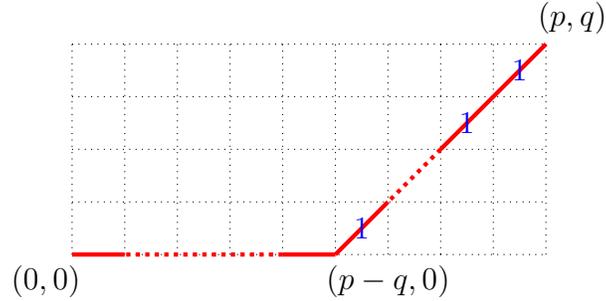

By Remark~\ref{R:it is observed} and (\ref{A:it is observed}) we see that 
\begin{align*}
\mt{rank} (I_{p,q}^\pm ) &=  \frac{ ((n-1) + \cdots + (n-q)) + (1+\cdots + (q-1))) + q}{2} \\ 
&=  \frac{nq - \frac{q(q+1)}{2} + \frac{q(q-1)}{2} + q }{2}\\
&= \frac{pq + q^2}{2}.
\end{align*}
Notice that the dimension of $X$ is $2pq$;
\begin{align*}
\dim X &= \dim SL_n - \dim S(GL_p\times GL_q)\\ 
&= (n^2-1) - (p^2 + q^2 - 1)\\
&= n^2 - p^2 - q^2 \\
&= 2pq.
\end{align*}
Therefore, the smallest possible dimension for a Borel orbit in $X$ is 
\begin{align*}
f_{min} (p,q) &:= \dim X - \text{rank}(I_{p,q}^\pm) \\
&= 2pq - \frac{pq + q^2}{2}\\
&= \frac{3pq- q^2}{2}.
\end{align*}

\begin{Remark}\label{R:explained}
If we denote the dimension of the Borel orbit in 
$X$ attached to $\pi \in I_{p,q}^\pm$ by $\dim \pi$, then 
$$
\dim \pi = \mathbb{L}(\pi) + f_{min}(p,q).
$$
Since, $f_{min}(p,q)$ is constant (relative to $p$ and $q$), 
the study of the function $\pi \mapsto \dim \pi$ is equivalent to study 
of the length function $\mathbb{L}(\cdot)$ on $I_{p,q}^\pm$. 
\end{Remark}
\begin{Definition}
The length generating function of $I_{p,q}^\pm$ is defined by
$$
E_{p,q}(t) := \sum_{\pi \in I_{p,q}^\pm} t^{\mathbb{L}(\pi)}.
$$
\end{Definition}

Our goal is to find a recurrence for $E_{p,q}(t)$. 
To this end we go back to our ideas in Section~\ref{S:Recurrence}.
Indeed, there is an important consequence of the proof of Theorem~\ref{T:rec}, 
where we essentially constructed a bijection  
$\psi_k=(\psi_k(+),\psi_k(-),\psi_k(1),\dots, \psi_k(n-1))$ from  
$$
I_{k,p-1,q}^\pm \times I_{k,p,q-1}^\pm \times  
\underbrace{I_{k-1,p-1,q-1}^\pm \times \cdots \times I_{k-1,p-1,q-1}^\pm}_{\text{ ($n-1$-copies)}}
$$
to $I_{k,p,q}^\pm$. 

In the light of Corollary~\ref{C:First recurrence}, 
we obtain the bijection $\psi=(\psi(+),\psi(-),\psi(1),\dots, \psi(n-1))$
\begin{align}\label{A:psi map}
\psi : I_{p,q-1}^\pm \times I_{p-1,q}^\pm \times  
\underbrace{I_{p-1,q-1}^\pm \times \cdots \times I_{p-1,q-1}^\pm}_{\text{ ($n-1$-copies)}} 
\longrightarrow I_{p,q}^\pm.
\end{align}

Next, we analyze the effect of maps $\psi(\pm)$ and $\psi(i)$, 
$i=1,\dots, n-1$ on the length of $\pi \in I_{p,q}^\pm$.

We know from Section~\ref{S:Preliminaries} that $\mathbb{L}(\pi)$ is equal to 
$(\ell(\pi)+k)/2$, where $k$ is the number of 2-cycles in $\pi$ and $\ell(\pi)$ 
is the number of inversions in $\pi$ viewed as a permutation. 
Thus, if $n$ is a fixed point of $\pi$, 
then removing it from $\pi$ has no effect on the length:
\begin{align}\label{A:pm case}
\mathbb{L}( \psi(\pm)^{-1} (\pi) )= \mathbb{L} (\pi).
\end{align}

For $\psi(i)$'s, it is more interesting; 
suppose $\pi$ has the standard form 
$$
\pi=(i_1,j_1)\cdots (i_k,j_k) c_1\dots c_{n-k}.
$$
If $n$ appears in the 2-cycle $(i_r,j_r)=(i,n)$, then by removing 
$(i_r,j_r)$ from $\pi$ we loose $n-i$ inversions of the form $n>j$
and we loose $n-i-1$ inversions of the form $j > i$. 
Moreover, we loose one 2-cycle. Therefore, 
\begin{align}\label{A:i case}
\mathbb{L}( \psi(i)^{-1}(\pi) ) = \frac{ \ell(\pi) + k - (2n-2i-1) - 1}{2} 
= \frac{\ell(\pi)+k}{2} - (n-i)= \mathbb{L}(\pi) - (n-i).
\end{align}

First, we partition $I_{p,q}^\pm$ into three disjoint sets 
$\psi(+)(I_{p-1,q}^\pm)$, $\psi(-)(I_{p,q-1}^\pm)$, and 
$\psi(i)(I_{p-1,q-1}^\pm)$ ($i=1,\dots, n-1$), then we reorganize 
the sums by using our observations (\ref{A:pm case}) and 
(\ref{A:i case}):
\begin{align*}
E_{p,q}(t) &= \sum_{\pi \in \psi(+)(I_{p-1,q}^\pm)} t^{\mathbb{L}(\pi)} 
+ \sum_{\pi \in \psi(-)(I_{p,q-1}^\pm)} t^{\mathbb{L}(\pi)} 
+ \sum_{ i=1}^{n-1} \sum_{\pi \in \psi(i)(I_{p-1,q-1}^\pm)} t^{\mathbb{L}(\pi)} \\
&= \sum_{\pi \in I_{p-1,q}^\pm} t^{\mathbb{L}(\pi)} + \sum_{\pi \in I_{p,q-1}^\pm} t^{\mathbb{L}(\pi)} 
+ \sum_{ i=1}^{n-1} \sum_{\pi \in I_{p-1,q-1}^\pm} t^{\mathbb{L}(\pi) + (n-i)} \\
&= E_{p-1,q}(t) + E_{p,q-1}(t)  + (t+ t^2 +\cdots + t^{n-1}) E_{p-1,q-1}(t). 
\end{align*}
The coefficient of the last term is equal to $[n]_t-1$, 
where $[n]_t$ stands for the $t$-analog of the natural number $n$:
$$
[n]_t := \frac{t^n-1}{t-1}.
$$
Thus we obtained the proof of the following result.
\begin{Theorem}
The length generating polynomials $E_{p,q}(t)$ (for $p,q\geq 1$)
satisfy the following recurrence:
$$
E_{p,q} (t) = E_{p-1,q}(t) + E_{p,q-1}(t) + ([q+p]_t-1) E_{p-1,q-1}(t).
$$
\end{Theorem}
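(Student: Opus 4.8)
The plan is to compute $E_{p,q}(t)=\sum_{\pi\in I_{p,q}^\pm}t^{\mathbb{L}(\pi)}$ directly through the bijection $\psi$ supplied by Corollary~\ref{C:First recurrence}, tracking how the length $\mathbb{L}(\cdot)$ transforms under each of its component maps. Recall that $\psi$ partitions $I_{p,q}^\pm$ into the disjoint images $\psi(+)(I_{p-1,q}^\pm)$, $\psi(-)(I_{p,q-1}^\pm)$, and $\psi(i)(I_{p-1,q-1}^\pm)$ for $i=1,\dots,n-1$, where $n=p+q$. Since $\mathbb{L}(\pi)=(\ell(\pi)+k)/2$ with $k$ the number of $2$-cycles, by Remark~\ref{R:it is observed} and (\ref{A:it is observed}), the whole argument reduces to recording the change in $\ell$ and in $k$ under each map and then reindexing the sum accordingly.

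First I would dispose of the two fixed-point maps. Appending $n^{+}$ (respectively $n^{-}$) to an involution on $[n-1]$ adjoins a fixed point at the maximal position $n$, which creates no new inversion and introduces no $2$-cycle; hence both $\ell$ and $k$ are unchanged and $\mathbb{L}(\psi(\pm)^{-1}(\pi))=\mathbb{L}(\pi)$. Therefore the $+$ and $-$ pieces contribute exactly $E_{p-1,q}(t)$ and $E_{p,q-1}(t)$.

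The heart of the matter is the map $\psi(i)$, which inserts the $2$-cycle $(i,n)$ and shifts up by one every entry exceeding $i$. Here I would count precisely the inversions gained. With $\pi(i)=n$ and $\pi(n)=i$: every pair $(i,b)$ with $i<b\le n$ is an inversion since $\pi(i)=n$ is maximal, giving $n-i$ of them; among the pairs $(a,n)$ with $a<n$ and $a\ne i$, an inversion occurs exactly when $\pi(a)>i$, and there are $n-1-i$ such positions; no pair $(a,i)$ with $a<i$ is an inversion. Thus exactly $(n-i)+(n-1-i)=2n-2i-1$ inversions involve position $i$ or $n$, while the order-preserving relabeling of the remaining entries leaves all other inversions intact. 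Combined with the single extra $2$-cycle, this gives
$$
\mathbb{L}(\pi)=\mathbb{L}(\psi(i)^{-1}(\pi))+\frac{(2n-2i-1)+1}{2}=\mathbb{L}(\psi(i)^{-1}(\pi))+(n-i),
$$
recovering (\ref{A:i case}). I expect this inversion bookkeeping---isolating the $2n-2i-1$ new inversions without double-counting the pair $(i,n)$---to be the main technical obstacle.

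With these two length computations in hand, the conclusion is a reindexing of the sum along the partition. The diagonal pieces contribute $\sum_{i=1}^{n-1}t^{n-i}E_{p-1,q-1}(t)=(t+t^2+\cdots+t^{n-1})E_{p-1,q-1}(t)$, and recognizing $t+\cdots+t^{n-1}=[n]_t-1$ with $n=p+q$ yields the asserted recurrence $E_{p,q}(t)=E_{p-1,q}(t)+E_{p,q-1}(t)+([q+p]_t-1)E_{p-1,q-1}(t)$.
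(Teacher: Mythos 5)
Your proposal is correct and follows essentially the same route as the paper: the same partition of $I_{p,q}^\pm$ via the bijection $\psi$ from the proof of Theorem~\ref{T:rec}, the same length bookkeeping giving $\mathbb{L}(\psi(\pm)^{-1}(\pi))=\mathbb{L}(\pi)$ and $\mathbb{L}(\psi(i)^{-1}(\pi))=\mathbb{L}(\pi)-(n-i)$, and the same reindexing producing the coefficient $t+\cdots+t^{n-1}=[p+q]_t-1$. Your inversion count of $2n-2i-1$ (performed from the insertion side rather than the paper's removal side) matches the paper's tally of $n-i$ inversions of the form $n>j$ plus $n-i-1$ of the form $j>i$, together with the one extra $2$-cycle.
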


\begin{Remark}
\begin{enumerate}
\item The polynomials $E_{p,q}(t)$ are unimodal. 
\item It appears that the sequence $(E_{n,n}(-1))_{n\geq 1}$ is 
the sequence of number of ``grand Motzkin paths'' of length $n$.
The sequence $(E_{n,n}(0))_{n\geq 1}$ is the sequence of number of 
``central binomial coeffients'' ${2n \choose n}$, $n\geq 1$.
(For other interpretations of these sequences, 
see The On-Line Encyclopedia of Integer Sequences, \url{https://oeis.org}.)
\item There is another closely related $t$-analogue. 
We define $\widetilde{E}_{p,q}(t)$'s by the recurrence
\begin{align*}
\widetilde{E}_{p,q}(t) = \widetilde{E}_{p-1,q}(t) + \widetilde{E}_{p,q-1}(t) + [q+p-1]_t \widetilde{E}_{p-1,q-1}(t).
\end{align*}
Similarly to $E_{p,q}(t)$, $\widetilde{E}_{p,q}(t)$ is a unimodal polynomial as well.  
Both of these families have interesting specializations.
\end{enumerate}
\end{Remark}

\subsection{Generating functions of $\gamma_{k,p,q}$'s.}

Recall that the numbers $\gamma_{k,p,q}$ are defined as 
the numbers of signed $(p,q)$-involutions with exactly $k$ 2-cycles and
in Theorem\ref{T:first main result:intro} we showed that 
$$
\gamma_{k,p,q}= \frac{(p+q)!}{(p-k)! (q-k)!} \frac{1}{2^k k !} \ \text{ for $k,p,q\geq 0.$} 
$$
In this subsection we study their generating function, namely 
$$
A_{p,q}(t) :=  \sum_{k=0}^{q} \gamma_{k,p,q} t^k.
$$

Our first observation is about an alternative description of $A_{p,q}(t)$'s. 
\begin{Proposition}
Let $p$ and $q$ be two nonnegative integers. Then 
$$
A_{p,q}(t) = \sum_{L \in \mc{P}(p,q)} t^{\# \text{of diagonal steps in $L$}}.
$$
\end{Proposition}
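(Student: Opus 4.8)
The plan is to read off the identity from the bijection $\phi\colon I_{p,q}^\pm \to \mc{P}(p,q)$ that was constructed in the proof of Theorem~\ref{T:anotherpaththeorem:body}, by tracking a single statistic through it. Recall that $A_{p,q}(t) = \sum_{k=0}^{q} \gamma_{k,p,q}\, t^k$, where $\gamma_{k,p,q} = |I_{k,p,q}^\pm|$ is the number of signed $(p,q)$-involutions having exactly $k$ $2$-cycles. So the whole statement amounts to matching the $t$-exponent in the two expressions.

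The only thing to verify is that $\phi$ carries the statistic ``number of $2$-cycles'' on $I_{p,q}^\pm$ to the statistic ``number of diagonal steps'' on $\mc{P}(p,q)$. This is transparent from the construction of $\phi$: at each stage of the recursion the largest surviving entry $n$ is removed, and the algorithm draws an $E$- or $N$-step exactly when $n$ is a signed fixed point, and a $D$-step exactly when $n$ is the larger entry of a $2$-cycle $(i,n)$, in which case that $2$-cycle is deleted and the step is labelled by $i$. Hence each of the $k$ $2$-cycles of $\pi$ is destroyed exactly once and contributes precisely one diagonal step to $\phi(\pi)$, while the signed fixed points contribute only $N$- and $E$-steps; therefore the number of diagonal steps of $\phi(\pi)$ equals $k$. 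Formally this is an induction on $n = p+q$ using the recursive description of $\phi$, the point being that removing $n$ lowers the $2$-cycle count by $1$ in the diagonal case and leaves it unchanged in the $N/E$ cases.

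Granting this refinement, I would restrict $\phi$ to a bijection between the signed involutions with exactly $k$ $2$-cycles, namely $I_{k,p,q}^\pm$, and the weighted Delannoy paths with exactly $k$ diagonal steps, and then sum over $k$:
$$
\sum_{L\in \mc{P}(p,q)} t^{\#\{\text{diagonal steps of }L\}}
= \sum_{k=0}^{q} \Bigl(\, \sum_{\substack{L\in \mc{P}(p,q)\\ \#\{\text{diag. steps}\}=k}} 1 \Bigr) t^k
= \sum_{k=0}^{q} |I_{k,p,q}^\pm|\, t^k
= \sum_{k=0}^{q} \gamma_{k,p,q}\, t^k = A_{p,q}(t).
$$
Here the upper limit $q$ is exactly the elementary bound $0 \le k \le q$ recorded when $I_{k,p,q}^\pm$ was introduced in Section~\ref{S:Signed involutions}. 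Since the only substantive step is the statistic-matching claim, and that is immediate from the construction of $\phi$, I do not expect any real obstacle beyond carefully stating this correspondence.
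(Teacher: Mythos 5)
Your proposal is correct and follows essentially the same route as the paper: the paper's proof likewise invokes the bijection $\phi$ from Theorem~\ref{T:anotherpaththeorem:body}, notes that it sends an element of $I_{k,p,q}^\pm$ to a weighted Delannoy path with exactly $k$ diagonal steps, and concludes by summing over $k$. Your write-up merely spells out the statistic-tracking verification (each $2$-cycle is destroyed exactly once, producing one $D$-step) that the paper leaves implicit.
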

\begin{proof}
By definition, $A_{p,q}(t) = \sum_{k=0}^{\lfloor n/2 \rfloor} |I_{k,p,q}^\pm| t^k$. 
The map $\phi$ that is constructed in Theorem~\ref{T:anotherpaththeorem:body}
maps an element $\pi$ of $I_{k,q,p}^\pm$ to a weighted $(p,q)$ Delannoy path $L$ 
with exactly $k$ diagonal steps. The proof follows from the fact that 
$\phi$ is a bijection.
\end{proof}

Next, we will show that $A_{p,q}(t)$'s satisfy
a recurrence that is similar to $E_{p,q}(t)$'s. 

\begin{Proposition}
If $p$ and $q$ are two positive integers, then the following recurrence
relation holds true: 
\begin{equation*}
A_{p,q}(t) = A_{p,q-1}(t) + A_{p-1,q} (t)+  (p+q-1)t  A_{p-1,q-1}(t)
\end{equation*}
with initial conditions $A_{p,0}(t)=A_{0,q}(t)=1$ for all $p,q\geq 0$.
\end{Proposition}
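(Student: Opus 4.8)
The plan is to derive the recurrence for $A_{p,q}(t)$ directly from the three-term recurrence for the coefficients $\gamma_{k,p,q}$ established in Theorem~\ref{T:rec}, by multiplying through by $t^k$ and summing over $k$. Starting from the definition $A_{p,q}(t)=\sum_{k=0}^q \gamma_{k,p,q}t^k$, I would substitute the relation $\gamma_{k,p,q}=\gamma_{k,p-1,q}+\gamma_{k,p,q-1}+(p+q-1)\gamma_{k-1,p-1,q-1}$ into the terms with $1\le k\le q$, while treating the $k=0$ term separately using $\gamma_{0,p,q}=\binom{p+q}{q}$.

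The first step is to promote the two ``straight'' sums $\sum_{k\ge1}\gamma_{k,p-1,q}t^k$ and $\sum_{k\ge1}\gamma_{k,p,q-1}t^k$ to sums starting at $k=0$. This forces me to account for the constant term $\gamma_{0,p,q}$, and the key identity is Pascal's rule in the guise $\gamma_{0,p,q}=\binom{p+q}{q}=\binom{p+q-1}{q}+\binom{p+q-1}{q-1}=\gamma_{0,p-1,q}+\gamma_{0,p,q-1}$. After absorbing these, the first sum becomes $\sum_{k=0}^q\gamma_{k,p-1,q}t^k=A_{p-1,q}(t)$ outright, while the second becomes $\sum_{k=0}^q\gamma_{k,p,q-1}t^k$; this equals $A_{p,q-1}(t)$ once I observe that the top term $\gamma_{q,p,q-1}t^q$ vanishes, since $\gamma_{k,p',q'}=0$ whenever $k>\min\{p',q'\}$, as is visible from the denominator factorials in the closed form \eqref{A:gamma kqp 1}.

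The second step is the mixed term. Reindexing by $j=k-1$ in $(p+q-1)\sum_{k=1}^q\gamma_{k-1,p-1,q-1}t^k$ extracts a factor of $t$ and yields $(p+q-1)t\sum_{j=0}^{q-1}\gamma_{j,p-1,q-1}t^j=(p+q-1)t\,A_{p-1,q-1}(t)$, which matches the coefficient in the claimed recurrence exactly. Assembling the three pieces gives $A_{p,q}(t)=A_{p-1,q}(t)+A_{p,q-1}(t)+(p+q-1)t\,A_{p-1,q-1}(t)$. The initial conditions follow by direct evaluation: $A_{p,0}(t)=\gamma_{0,p,0}=\binom{p}{0}=1$, and, invoking the symmetry $\gamma_{k,p,q}=\gamma_{k,q,p}$, $A_{0,q}(t)=\gamma_{0,0,q}=1$.

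The only genuine obstacle is the bookkeeping of summation limits and boundary terms: I must verify that extending or truncating the ranges introduces only terms $\gamma_{k,p',q'}$ with $k$ lying outside the support $0\le k\le\min\{p',q'\}$, all of which vanish. Once this vanishing is recorded, the remaining computation is a routine reindexing, and the appearance of the factor $t$ (in contrast to the constant coefficient in Corollary~\ref{C:First recurrence}) is precisely the effect of the index shift in the third sum.
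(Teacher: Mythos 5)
Your proof is correct and follows the same route as the paper, whose entire proof is the one-line remark that the claim follows from the recurrence for $\gamma_{k,p,q}$ in Theorem~\ref{T:rec}; you have simply supplied the routine details (the Pascal identity for the $k=0$ terms, the vanishing of $\gamma_{k,p',q'}$ outside $0\le k\le\min\{p',q'\}$, and the reindexing that produces the factor $t$) that the authors left implicit.
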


\begin{proof}
The proof follows from the recurrence relation in Theorem~\ref{T:rec}.
\end{proof}

\begin{Remark}
$A_{p,q}(t)$'s seems to be unimodal as well.
\end{Remark}

\vspace{.25cm}
\textbf{Acknowledgements.} 
We thank Michael Joyce, Roger Howe, Luke Nelson, 
and Jeff Remmel.

\end{document}